\documentclass[3p]{amsart}
\usepackage{amsmath,amssymb,amsbsy,amsthm,mathrsfs}

\usepackage[margin=1.05in]{geometry}

\usepackage{algorithm}
\usepackage{algpseudocode}
\usepackage[dvipsnames]{xcolor}
\usepackage{tikz}
\usetikzlibrary{decorations.pathmorphing}
\usepackage{pgfplots}
\usepackage{lmodern}
\usepackage{booktabs}
\usepackage{multirow}
\usepackage{thmtools}
\usepackage{hyperref}
\hypersetup{colorlinks = true, allcolors = blue}
\usepackage[nameinlink,noabbrev]{cleveref}

\newtheorem{thm}{Theorem}
\newtheorem{cor}[thm]{Corollary}

\newtheorem{prop}[thm]{Proposition}
\newtheorem{rem}[thm]{\em Remark}
\newtheorem{ex}[thm]{\em Example}

\pgfplotsset{compat=1.18}

\begin{document}

\title{Convergence Analysis of the Random Bisection Method} 

\author{Ludovick Bouthat}

\author{Philippe-André Luneau}

\author{Philippe Petitclerc}

\address{D\'epartment de Math\'ematiques et Statistiques, Universit\'e Laval, Qu\'ebec, Canada}

\begin{abstract}
We propose a generalized version of the bisection method where the cutting point between the two subintervals is chosen at random following an arbitrary distribution. We compute expected convergence rates with respect to any arbitrary absolutely continuous a priori distribution for the position of the root in the initial interval and prove that it depends only on the expectation $\mathbb{E}[c(1-c)]$ of the cut $c$. We also provide a generalization of the method for $K$ random cuts and study its convergence properties. Our theoretical results are then validated numerically using statistical simulation.
\end{abstract}

\keywords{Root-finding Algorithms; Iterative Methods; Geometric Probability; Random Dynamical Systems; Linear Operators; Statistical Simulation}

\subjclass[2020]{65H05; 60G10; 47N40; 37H99; 37A50} 

\setlength{\skip\footins}{1.2pc plus 5pt minus 2pt}

\maketitle

\vspace{-4pt}
\section{Introduction}

\subsection{Background and motivation}

Root-finding algorithms have been part of the mathematician's toolbox for centuries (their usage can be traced back as far as the first century CE, by Hero of Alexandria \cite{Earl_Nicholson_2021}). In general, it is not possible to find analytically the solutions to a nonlinear equation. Those equations arise naturally from problems in natural sciences or engineering, and must often be solved in the context of numerical simulation of physical or industrial processes. For example, the well-known Newton's method and its variants are an essential part of modern scientific computing and optimization software. However, to achieve a sufficient level of efficiency and robustness, this class of algorithms generally requires first or even second-order information. In many contexts, such as blackbox optimization, this information is unknown, or at best, approximated (e.g., Broyden-class algorithms \cite{Byrd_Liu_Nocedal_1992}). This naturally motivates the use of derivative-free algorithms for root-finding and optimization.

There are two large families dominating the field of derivative-free algorithms. The oldest, bracketing methods, have been around for a very long time because of their simplicity and reliability, as they ensure global convergence under weak hypotheses \cite{Suhadolnik_2012}. The principle of such methods is simple: the algorithm builds a sequence of nested intervals containing the given root or minimizer. The lengths of the intervals decrease at each iteration, such that after a large number of iterations, the root or minimizer can be precisely approximated. Well-known examples of this approach are the \emph{regula falsi} method (false position), the golden-section search for optimization, and the bisection method, the latter being the central theme of this article. 

A more recent subclass of derivative-free methods is the family of probabilistic search algorithms. These techniques, based on function evaluations, can assign a probability of finding a root or minimizer to regions of the search space (for instance by using a probabilistic data-driven model), and then choose the next iterate according to a deterministic or nondeterministic rule. With the increasing quantity of data extracted from natural and industrial processes and the rise in popularity of machine learning, these techniques have become more prevalent over the years. Examples of algorithms following this paradigm include Bayesian optimization \cite{garnett_bayesoptbook_2023} and metaheuristics, like simulated annealing or evolutionary algorithms  \cite{Gendreau_Potvin_2019,Li_Gong_Lim_Liao_Gu_2024}.

The algorithm studied here lies precisely at the intersection of these two classes. In the classical bisection algorithm, the nested intervals are generated by simply cutting the previous interval in half, and keeping the subinterval containing the root. In this work, we investigate what happens when this elementary deterministic procedure is randomly perturbed. More precisely, instead of cutting the interval at its midpoint, the cutting point is chosen at random according to a given distribution. This seemingly minor modification transforms the algorithm into a stochastic dynamical system whose probabilistic behavior turns out to have surprisingly regular structure.

At first glance, one might expect that if the cut is chosen from a distribution centered at the midpoint, then the method should retain, at least on average, some of the behavior of the classical bisection algorithm. Numerical experimentation quickly showed that this intuition was misleading. Indeed, the randomized procedure exhibited a slower convergence rate and several regular probabilistic patterns that suggested the existence of invariant distributions and explicit convergence laws. This paper naturally grew out of the effort to understand these observations.

Using elementary probabilistic and geometric arguments, we derive explicit formulas for the distribution of the interval reduction factor and its first moments. We show that the uniform distribution is stationary for the normalized root process, \emph{independently of the distribution of the cuts}, and establish convergence to this invariant distribution for any initial absolutely continuous root distribution, provided that the cut distribution gives positive mass to the interior of the interval. Consequently, we obtain explicit expressions for the expected convergence rate of the stochastic bisection method, depending only on $\mathbb{E}[c(1-c)]$. These results yield a probabilistic optimality principle: among all such randomized variants, the classical deterministic bisection method is essentially the only one achieving the smallest expected contraction factor.

We also study a natural generalization in which $K>1$ random cuts are chosen at each iteration. Although increasing the number of cuts is expected to accelerate convergence, we show that the observed acceleration can behave in a counter-intuitive way for random multisection strategies. In particular, the uniform random trisection procedure has, on average, the same contraction factor as the classical deterministic bisection method.

\subsection{The Stochastic Bisection Algorithm and Related Definitions}

Given a continuous function  $f:\mathbb{R}\to\mathbb{R}$ with a root $r$, i.e., $f(r)=0$, the bisection method can approximate the root's value if it is known to lie in an interval $[a,b]$ such that $f$ has opposite signs at the endpoints. The cut function is defined as the midpoint of the interval:
\[
\mathrm{cut}(a,b) = \frac{a+b}{2}.
\]

It is easily shown that the interval generated by this algorithm has length $\bigl|[a_n,b_n]\bigr| = 2^{-n}(b-a)$. If the cuts $c_n$ are viewed as approximations of the root $r$, the error $e_n = |c_n-r|$ is always bounded by the interval half-length, i.e., $e_n \leq 2^{-(n+1)}(b-a)$. Thus, the algorithm converges linearly with rate $\frac{1}{2}$, that is, $e_{n+1}\approx \frac{1}{2}e_n$. The algorithm generally applies to functions with multiple roots in $[a,b]$, but here we assume the root is unique.

\vspace{-6pt}
\begin{algorithm}[ht]
\caption{Bisection Method}\label{alg:bisection}
\begin{algorithmic}
\Require $a,b$ s.t.~$a<b$, $f$ s.t.~$f(a)f(b)<0$ with root $r\in[a,b]$, $\varepsilon>0$, $N>0$
\Ensure $r\in [a_n,b_n]$
\State $n \gets 0$
\State $a_n \gets a$
\State $b_n \gets b$
\While{$b_n-a_n \geq \varepsilon$ and $n < N$}
\State $c_n \gets \mathrm{cut}(a_n,b_n)$
\If{$f(a_n)f(c_n)<0$}
    \State $a_{n+1} \gets a_n$
    \State $b_{n+1} \gets c_n$
\Else
    \State $a_{n+1} \gets c_n$
    \State $b_{n+1} \gets b_n$
\EndIf
\State $n \gets n+1$
\EndWhile\\
\Return $[a_n,b_n]$
\end{algorithmic}
\end{algorithm}

Because of its simplicity and robustness, the bisection method has inspired a number of generalizations. For instance, one natural extension of the bisection algorithm is to increase the number of partitions of the current interval: the \emph{multisection methods} (or $K$-section), of which bisection and trisection are particular cases, divide the interval into $K$ equal subintervals \cite{Badr_Almotairi_Ghamry_2021}. These methods have faster theoretical convergence rates than classical bisection, at the cost of more function evaluations per iteration. Higher-dimensional bracketing methods inspired by bisection, for instance based on simplices, have also been introduced \cite{Kearfott_1987,Wood_1992}.

Early work by Zemel \cite{Zemel_1986} and Kennedy \cite{Kennedy_1988} studied a very similar problem, minimization with random sampling, using respectively a bisection algorithm on $f'$ and the Golden Section method. The latter derived the interesting result that the limit point of a randomized Golden Section algorithm follows a Beta distribution. The present article lies in the direct continuation of these works and establishes analogous results for the random bisection algorithm.

Another direction has been to adapt the bisection algorithm to settings where the function is uncertain or noisy. The Probabilistic Bisection Algorithm (PBA) \cite{Frazier_Henderson_Waeber_2019,Horstein_1963,Rodriguez_Ludkovski_2020,Waeber_Frazier_Henderson_2013} constructs and updates a stochastic model for the root position based on function queries. Waeber, Frazier, and Henderson \cite{Waeber_Frazier_Henderson_2013} proved, using random-walk theory, that the expected error of PBA converges linearly to $0$, as in the classical bisection method. Other stochastic variants involving the function $f$ have also been studied; for example, \cite{Emiris_Galligo_Tsigaridas_2010,Kumari_Chakraborty_2024} analyze the expected behavior and complexity of the bisection method for random polynomials.

The stochastic variant considered in the present paper follows a different philosophy. It is formally analogous to the classical bisection method, but with one crucial modification: the function $f$ remains deterministic, while the cut $c_n$, instead of being chosen as the midpoint of the current interval, is sampled at random from a distribution $\mathcal{D}$ supported on $[a_n,b_n]$:
\begin{equation*}
    \mathrm{cut}(a_n,b_n) = c_n \sim \mathcal{D}(a_n,b_n), \qquad n\in\mathbb{N}.
\end{equation*}
In this sense, our framework differs fundamentally from the PBA: randomness stems not from noisy evaluations or function uncertainty, but from the algorithmic choice of the cut itself.

Our main objective is to determine the \emph{expected convergence rate} of such a method. To simplify the analysis of the cuts and endpoints, we instead analyze a scale-invariant algorithm (\Cref{alg:RRbisection}) generating the interval length. The starting interval is taken to be $[0,1]$ without loss of generality, and the root is normalized to its relative position in $[0,1]$ at each iteration. Note that this algorithm is not meant for practical use, since $r$ is generally unknown.

\vspace{-1.5pt}
\begin{algorithm}[ht]
\caption{Rescaled Bisection Method}\label{alg:RRbisection}
\begin{algorithmic}
\Require A point $r\in[0,1]$, $\varepsilon>0$, $N>0$
\Ensure $\ell_n$ and $L_n$ are respectively the scaling factor and the length of the interval at iteration $n$
\State $n \gets 0$
\State $a_n \gets 0$
\State $b_n \gets 1$
\State $\ell_n \gets \infty$
\State $L_n \gets 1$
\State $r_n \gets r$
\While{$L_n \geq \varepsilon$ and $n<N$}
\State $c_n \gets \mathrm{cut}(0,1)$
\If{$c_n>r_n$}
    \State $a_{n+1} \gets 0$
    \State $b_{n+1} \gets c_n$
\Else
    \State $a_{n+1} \gets c_n$
    \State $b_{n+1} \gets 1$
\EndIf
\State $r_{n+1}\gets \frac{r_n - a_{n+1}}{b_{n+1}-a_{n+1}}$ \Comment{Rescaling of the root to $[0,1]$}
\State $\ell_{n+1} \gets (b_{n+1}-a_{n+1})$
\State $L_{n+1} \gets \ell_{n+1}L_n$

\State $n \gets n+1$
\EndWhile\\
\Return $\ell_n$, $L_n$
\end{algorithmic}
\end{algorithm}
\vspace{-1.5pt}

In order to state our results in a way that applies to all random variables on $[0,1]$, including those that do not admit a probability density function, we use the Lebesgue--Stieltjes integral. Recall that any real-valued random variable $X$ on $[0,1]$ is characterized by its cumulative distribution function (CDF) $F(x)=\mathbb{P}(X\leq x)$, which is nonnegative, nondecreasing, right-continuous, and satisfies $F(0)=\mathbb{P}[X=0]$ and $F(1)=1$. Even when $F$ is not differentiable, one can still define, for any bounded measurable function $g$, integrals of the form
\begin{align*}
    \mathbb E[g(X)] = \int_{0}^{1} g(x)\,\mathrm{d} F(x).
\end{align*}
When $F$ happens to be differentiable with density $f$, the above reduces to the familiar formula
\begin{align*}
    \int_{0}^{1} g(x)f(x)\,\mathrm{d} x.
\end{align*}
Thus the Lebesgue--Stieltjes integral provides a unified notation that simultaneously covers continuous, discrete, and mixed distributions, avoiding the need to treat these cases separately. In what follows, this framework allows us to formulate our arguments in a concise and general way, without imposing unnecessary regularity assumptions such as the existence of a density.

\subsection{Stochastic Dynamical Systems}

In \Cref{alg:RRbisection}, the rescaling transformation can be written explicitly as $r_{n+1} = T(c_n,r_n)$, where
\begin{equation}\label{eq - iteration_root}
    T(c,r) = 
    \begin{cases}
        \frac{r}{c}& ~\text{if }\,c\geq r,\\
        \frac{r-c}{1-c} & ~\text{if }\,c < r,
    \end{cases}
\end{equation}
which corresponds to the skewed Dyadic map \cite{Roslan_2018} with parameter $1/c$ (\Cref{fig:skewtentmap}). The iterated \emph{Dyadic map} ($c=1/2$) is a well-known chaotic dynamical system which is topologically mixing \cite{Rodriguez-Gonzalez_2023}, meaning that for any open sets $A,B\subset [0,1]$, $T^n(1/2,A)\cap B \neq \varnothing$ for all $n$ large enough. Intuitively, under suitable regularity assumptions, the mixing nature of this transformation will tend to homogenize non-uniform root distributions (this will be shown precisely in \Cref{sec:3}). 

\begin{figure}[ht]
    \centering
    \begin{tikzpicture}[scale=.85]

    \definecolor{bluefill}{RGB}{25,25,112}
  \begin{axis}[
    width=7.58cm, height=7.58cm,
    xmin=0, xmax=1,
    ymin=0, ymax=1,
    xlabel={$r$},
    ylabel={$T(c,r)$},
    ylabel style={yshift=-8pt},
    axis lines=left,
    xtick={0, 0.3, 1},
    xticklabels = {$0$, $c$, $1$},
    ytick={0, 1},
    grid=major,
    grid style={dashed, gray!40},
    x label style={at={(axis description cs:1.03,0)},anchor=west},
  ]

  \addplot[bluefill, thick, domain=0:0.3, samples=2] {x/0.3};

  \addplot[bluefill, thick, domain=0.3:1, samples=2] {(x-0.3)/(0.7)};

  \end{axis}
\end{tikzpicture}
\vspace{-6pt}
    \caption{Skewed Dyadic map with parameter $1/c$.}
    \label{fig:skewtentmap}
\end{figure}

Because the stochastic bisection algorithm generates an iterated random map on the unit interval, it is tempting to view it in the framework of stochastic dynamical systems and ergodic theory. In this setting, one studies stationary distributions, invariant measures, and convergence toward equilibrium for random compositions of transformations. A classical result in this field is due to Diaconis and Freedman \cite{Diaconis_Freedman_1999}, who gave sufficient conditions for the uniqueness of a stationary distribution for random iterated Lipschitz maps. This result requires that the map is contractive on average, which is not satisfied by the map $T$.
Indeed, the random skewed Dyadic map is expanding on average, as is the deterministic Dyadic map. Hence, contraction-based ergodic results such as the one of Diaconis and Freedman cannot be used directly to establish or identify a stationary distribution in our setting.

Expansive maps have long been studied in dynamical systems. As early as 1973, Lasota and Yorke \cite{Lasota_Yorke_2004} used the Frobenius--Perron operator to prove $L^1$ convergence of Cesàro averages for piecewise $\mathcal{C}^2$ expanding maps of the unit interval, thereby establishing the existence of one or infinitely many invariant distributions for this class. These ideas were later extended to random maps with finitely many branches \cite{Keller_1982,Pelikan_1984}, and related systems continue to be investigated in recent work \cite{Yan_Majumdar_Ruffo_Sato_Beck_Klages_2024}. Other interval maps with random parameters, such as skewed tent maps \cite{Biswas_Seth_2018} and $\beta$-transformations \cite{Suzuki_2024}, exhibit dynamical behaviors qualitatively similar to those of the transformation $T(c,r)$ considered here, including chaotic and mixing properties.

Nevertheless, the stochastic bisection algorithm differs from these models in a significant way: the randomness enters through a continuous parameter governing the slope of the map at each iteration, rather than through a finite set of branches with stationary probabilities.
As a result, these ergodic-theoretic results do not directly yield explicit information on the invariant distribution or on convergence rate of the process.
Although it is plausible that the algorithm could be studied within a sufficiently refined framework of random dynamical systems, doing so would require technical machinery and specialized terminology.

For this reason, we adopt in the present work a more elementary probabilistic approach based on geometric considerations. This viewpoint allows us to derive explicit formulas describing the evolution of the interval length and the distribution of the normalized root, and to complement the analytical results with numerical experiments. To the best of our knowledge, the probabilistic properties established in this paper have not been previously reported in the literature.

\subsection{Outline of the paper}

The main objective of this paper is to determine explicit expected convergence rates for the stochastic bisection algorithm. To this end, we progressively analyze the probabilistic structure of the random process generated by the algorithm under increasingly general assumptions.

In \Cref{sec:2}, we first consider the setting where the a priori distribution of the root is uniform and the cut distribution is arbitrary. This case serves as a fundamental step toward understanding the convergence behavior of the algorithm. Using elementary probabilistic arguments and geometric interpretations, we show that all normalized roots are identically distributed, explicitly describe the distribution of the scaling factor $\ell_n$ and its first moments in terms of the cut distribution, and obtain formulas for the expected contraction of the interval. Together, these results show that random perturbations of the classical bisection rule give rise to scale-invariant stochastic dynamics: the normalized root position converges to a uniform invariant distribution, allowing explicit computation of the algorithm’s expected convergence rate.

In \Cref{sec:3}, we remove the assumption of a uniform root distribution. We prove convergence for absolutely continuous initial root distributions under the trivial condition $\mathbb{P}[0<c<1]>0$, and establish an explicit rate for this convergence. In particular, this shows that the rates derived in \Cref{sec:2} describe the asymptotic behavior of the algorithm in full generality.

In \Cref{sec:4}, we study a natural generalization of the method, namely a stochastic multisection algorithm in which several random cuts are chosen at each iteration. For simplicity, we restrict our attention to the case where both the root and the cuts are uniformly distributed. We show that the uniform distribution remains stationary for this extended process and derive explicit expected convergence rates as functions of the number of cuts.

Lastly, \Cref{sec:5} presents numerical experiments to validate the theoretical predictions of the previous sections. By considering various features of the root and cut distributions (such as asymmetry or concentration of mass), these experiments provide insight into the robustness of the convergence behavior and illustrate general trends in the algorithm's performance.

\section{Analysis under a uniform prior}\label{sec:2}

\subsection{The interval length after a single iteration}\label{sec:2.1}

In theory, for a given function $f$, the root $r$ sought by the stochastic bisection algorithm is fixed and deterministic. In practice, however, $r$ is unknown and thus behaves like a random variable with an unknown distribution on $(0,1)$ over several runs of the algorithm.

For simplicity and for computational reasons that shall soon become clear, we begin by assuming that the initial root $r$ is uniform on $(0,1)$, that is $r=r_0\sim \mathcal{U}(0,1)$. Moreover, we also assume that the cuts $c_n$ are chosen independently from the normalized roots $r_n$ and that they have the CDF $F$. Following \Cref{alg:RRbisection}, it is easy to deduce that the length of the interval after the first cut is
\begin{equation}\label{def - l}
    \ell_1 = \ell(c_0,r_0)=
    \begin{cases}
    c_0 & ~\text{if }\,c_0\geq r_0,\\
    1-c_0 & ~\text{if }\,c_0<r_0.
    \end{cases}
\end{equation}
Knowing $r_0$, the expected value of $\ell_1$ is then given by:
\begin{align}
    \mathbb{E}[\ell_1\,|\, r_0] &= \mathbb{E}[\ell_1\,|\, c_0>r_0,r_0]\,\mathbb{P}[c_0\geq r_0\,|\, r_0] + \mathbb{E}[\ell_1\,|\, c_0<r_0,r_0]\,\mathbb{P}[c_0<r_0\,|\, r_0] \label{eq:El1r0} \\
    &= \mathbb{E}[c_0 \mathbf{1}_{\{c_0\geq r_0\}}\,|\, r_0] + \mathbb{E}[(1-c_0)\mathbf{1}_{\{c_0< r_0\}}\,|\, r_0] \nonumber \\
    &= \int_{r_0}^1 c_0 \,\mathrm{d} F(c_0) + \int_0^{r_0} (1-c_0) \,\mathrm{d} F(c_0). \nonumber
\end{align}
By the law of total expectation, taking the expected value of this expression over the distribution of $r_0$ will give the expected scaling factor after the first cut.

\begin{ex}\label{example - unif}
    Knowing, for instance, that $c\sim \mathcal{U}(0,1)$, $\mathrm{d} F(c_0)= \mathrm{d} c_0$ and we get
\begin{equation*}
   \mathbb{E}[\ell_1\,|\, r_0] = \frac{1 + 2r_0 - 2 r_0^2}{2}.
\end{equation*}

The expected length of the interval after the first random cut depends on the value of the root. As seen in \Cref{fig:explength}, it varies between $\frac{1}{2}$ (if the root is near the endpoints) and $\frac{3}{4}$ (if the root is near the center). By integrating over $r_0$, we find $\mathbb{E}[\ell_1] = \frac{2}{3}$.
\end{ex}

\begin{ex}
    Assuming that $c \sim \mathrm{Beta(2,2)}$, we get
    \begin{align*}
        \mathbb{E}[\ell_1\,|\, r_0] =\biggl(\frac{1}{2} - 2r_0^3 + \frac{3}{2}r_0^4\biggr) + \biggl(3r_0^2 - 4r_0^3+\frac{3}{2}r_0^4\biggr).
    \end{align*}
    The distribution of the expected scaling factor with respect to $r_0$ is shown in \Cref{fig:explength_beta22}. After integrating, the scaling factor is $\mathbb{E}[\ell_1] = \frac{3}{5}$. Intuitively, the scaling factor is smaller than in the previous example (uniform cut) because the distribution has a smaller dispersion ($\sigma^2 = \frac{1}{20}$ as opposed to $\sigma^2=\frac{1}{12}$) around the mean. Thus, the algorithm is ``closer'' to the deterministic bisection method (scaling factor of exactly $\frac{1}{2}$).
\end{ex}

\begin{figure}[ht]
\begin{minipage}{.49\linewidth}
    \vspace{-2.5pt}
    \begin{figure}[H]
    \centering
    \begin{tikzpicture}[scale=.7]
        \begin{axis}[
            width=9cm,
            height=8cm,
            xlabel={$r_0$},
            ylabel={$\mathbb{E}[\ell_1|r_0]$},
            ymax=.755,
            domain=0:1,
            samples=200,
            axis lines=left,
            grid=both,
        ]
        \addplot[
            thick,
        ]
        {(1 + 2*x - 2*x^2)/2};
        \end{axis}
    \end{tikzpicture}
    \vspace{-6pt}
    \caption{Expected length after one iteration if $c_0\sim \mathcal{U}(0,1)$.}
    \label{fig:explength}
\end{figure}
\end{minipage}%
\begin{minipage}{.02\linewidth}
\hfill
\end{minipage}%
\begin{minipage}{.49\linewidth}
    \begin{figure}[H]
    \centering
    \begin{tikzpicture}[scale=.7]
        \begin{axis}[
            width=9cm,
            height=8cm,
            ymax=.69,
            xlabel={$r_0$},
            ylabel={$\mathbb{E}[\ell_1|r_0]$},
            domain=0:1,
            samples=200,
            axis lines=left,
            grid=both,
        ]
        \addplot[
            thick,
        ]
        {0.5 - 2*x^3 + 1.5*x^4) + 3*x^2 - 4*x^3+1.5*x^4)};
        \end{axis}
    \end{tikzpicture}
    \vspace{-6pt}
    \caption{Expected length after one iteration if $c_0\sim \mathrm{Beta}(2,2)$.}
    \label{fig:explength_beta22}
\end{figure}
\end{minipage}
\end{figure}

Notice that, after one step of the algorithm and assuming that $r_0$ is uniform on $(0,1)$, $\mathbb{E}[\ell_1]$ depends then only on the distribution of the cut. If it was the case that the distribution of the root $r_1$ after one step of the algorithm was still uniform on $(0,1)$ after the rescaling, then the arguments above would remain valid and $\mathbb{E}[\ell_n]$ would stay constant throughout the iterations. 

\subsection{Stationarity of the Uniform Distribution}

We show here that, assuming that $r_n\sim \mathcal{U}(0,1)$, the distribution of the root $r_{n+1}$ is indeed still uniform after the rescaling. 

\begin{prop}
    If $r_0\sim\mathcal{U}(0,1)$, then $r_n\sim \mathcal{U}(0,1)$ for all $n\geq1$. That is, $\mathcal{U}(0,1)$ is a stationary distribution for the random process described in \Cref{alg:RRbisection}. \label{thm:stationarity}
\end{prop}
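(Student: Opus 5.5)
By induction on $n$, it suffices to prove the one-step claim: if $r_n\sim\mathcal{U}(0,1)$ and $c_n$ is independent of $r_n$ with arbitrary CDF $F$, then $r_{n+1}=T(c_n,r_n)\sim\mathcal{U}(0,1)$, where $T$ is the skewed Dyadic map \eqref{eq - iteration_root}. For the induction to close, we also need $c_{n+1}$ to be independent of $r_{n+1}$. This holds because $r_{n+1}$ is a function of $(r_0,c_0,\dots,c_n)$, and the cuts are drawn independently of the past. This is the standing independence assumption of \Cref{sec:2.1}, which we make explicit at the start of the proof.

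For the one-step claim, we condition on the cut. Fix $c\in[0,1]$ and compute $\mathbb{P}[T(c,r_n)\le x]$ for $x\in[0,1]$. For $0<c<1$, the event splits along the two branches of $T$. On $\{r_n\le c\}$ we have $T=r_n/c\le x$ exactly when $r_n\le cx$. On $\{r_n>c\}$ we have $T=(r_n-c)/(1-c)\le x$ exactly when $c<r_n\le c+(1-c)x$. Since $r_n$ is uniform, these events have probabilities $cx$ and $(1-c)x$, which sum to $x$. Thus, conditionally on $c_n=c$, the law of $r_{n+1}$ is $\mathcal{U}(0,1)$. Geometrically, each branch of $T$ is an affine bijection from its subinterval onto $[0,1]$, and its slope exactly compensates the mass of that subinterval.

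Integrating the conditional identity against $\mathrm{d}F(c)$ gives
\[
\mathbb{P}[r_{n+1}\le x]=\int_0^1 \mathbb{P}[T(c,r_n)\le x]\,\mathrm{d}F(c)=x,
\]
with no assumption on $F$. This explains why stationarity holds independently of the cut distribution. Measurability of $T$ on $[0,1]^2$ and the independence of $c_n$ and $r_n$ justify the use of Fubini here.

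The main delicate point is the degenerate cuts $c\in\{0,1\}$, which $F$ may charge. At $c=1$ the first branch is the identity. At $c=0$ we have $r_n\ge c$ almost surely, and $\{r_n=0\}$ is null, so the second branch is the identity almost surely. The convention $T(0,0)=0/0$ matters only on a null set. In both cases $r_{n+1}=r_n$ almost surely, which is still uniform, so the integral above is unaffected.
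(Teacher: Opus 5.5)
Your proof is correct and follows the paper's argument: both integrate out the cut, observe that under the uniform law of the root the two branches of $T$ contribute $tc$ and $t(1-c)$, and integrate the resulting constant $t$ against $\mathrm{d}F$ before inducting on $n$. Your explicit treatment of the independence of $c_{n+1}$ from $r_{n+1}$ and of the degenerate cuts $c\in\{0,1\}$ spells out points the paper leaves implicit; its iterated-integral formula already gives $t$ at those endpoints.
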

\begin{proof}

The rescaled root $r_1=T(c_0,r_0)$ will be uniformly distributed if $\mathbb{P}[T(c_0,r_0) \leq t] = t$ for all $t\in[0,1]$. Let $L^-_t(T)=\{(c_0,r_0): T(c_0,r_0) \leq t\}$ be the $t$-lower level set of $T$, which admits the geometric description\vspace{-.6pt}
\begin{equation}\label{eq - region}
    \bigl\{(c_0,r_0): c_0\in[0,1], r_0\in[0,tc_0]\bigr\} \cup  \bigl\{(c_0,r_0):  c_0\in[0,1],r_0\in (c_0,t +(1-t)c_0] \bigr\},\vspace{-.5pt}
\end{equation}
and is illustrated in \Cref{fig:regionLT}. Therefore, it follows from the definition that\vspace{-.5pt}
\begin{align*}
    \mathbb{P}[T(c_0,r_0) \leq t] &= \int_0^1\int_0^1 \mathbf{1}_{L^-_t(T)} \,\mathrm{d} r_0 \,\mathrm{d} F(c_0)\\
    &=\int_0^1  \bigg(\int_0^{tc_0} \mathrm{d} r_0 + \int_{c_0}^{t+(1-t)c_0} \mathrm{d} r_0 \biggr) \,\mathrm{d} F(c_0) \\
    & = t \int_0^1 \mathrm{d} F(c_0) \;=\; t.\vspace{-.5pt}
\end{align*}
Since the argument is independent of $n$, we may recursively show that $r_n \sim \mathcal{U}(0,1)$ for any $n\geq 1$, thus proving that $\mathcal{U}(0,1)$ is a stationary distribution of the algorithm.
\end{proof}

Note that the fact that $\mathcal{U}(0,1)$ is a stationary distribution of the random process described in \Cref{alg:RRbisection} holds no matter the CDF $F$. Hence, at each step of the process, the rescaled root $r_n$ always follows a uniform distribution on $[0,1]$, regardless of the distribution chosen for the cut. The following corollary is obtained as a consequence of what was presented in \Cref{sec:2.1}.

\begin{cor}\label{cor - constant}
    Let $r\sim \mathcal{U}(0,1)$. There exists a constant $C>0$ such that the random variables $\ell_n$ generated by \Cref{alg:RRbisection} satisfy $\mathbb{E}[\ell_n] = C$ for all $n\geq 1$, with the constant $C$ depending only upon the chosen distribution $\mathcal{D}$ of the cuts.
\end{cor}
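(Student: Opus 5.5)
The plan is to combine the stationarity result, \Cref{thm:stationarity}, with the conditional-expectation computation \eqref{eq:El1r0} of \Cref{sec:2.1}. Since $r_0\sim\mathcal{U}(0,1)$, \Cref{thm:stationarity} gives $r_n\sim\mathcal{U}(0,1)$ for every $n\ge 0$. By \Cref{alg:RRbisection}, $\ell_{n+1}=\ell(c_n,r_n)$ with $\ell$ as in \eqref{def - l}. The cut $c_n$ has CDF $F$ and is drawn independently of $r_n$. Indeed, $r_n$ is a function of $r_0,c_0,\dots,c_{n-1}$ only, and the cuts are i.i.d.\ and independent of $r_0$. I will state this independence explicitly, since it is the one point that needs care: we need the pair $(c_n,r_n)$ to have law $\mathrm{d}F\otimes\mathcal{U}(0,1)$, and not merely the right marginals.

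Given this, the pair $(c_n,r_n)$ has the same joint law as $(c_0,r_0)$. Therefore $\ell_{n+1}=\ell(c_n,r_n)$ has the same distribution as $\ell_1=\ell(c_0,r_0)$, and in particular $\mathbb{E}[\ell_{n+1}]=\mathbb{E}[\ell_1]$ for all $n\ge0$. This settles constancy, with $C:=\mathbb{E}[\ell_1]$.

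To make $C$ explicit and show that it depends only on $F$, I will integrate \eqref{eq:El1r0} over $r_0\in(0,1)$ and apply Fubini (everything is bounded and nonnegative):
\begin{align*}
C&=\int_0^1\!\!\int_0^1\bigl(c\,\mathbf{1}_{\{c\ge r\}}+(1-c)\mathbf{1}_{\{c<r\}}\bigr)\,\mathrm{d}r\,\mathrm{d}F(c)\\
&=\int_0^1\bigl(c^2+(1-c)^2\bigr)\,\mathrm{d}F(c)=1-2\,\mathbb{E}[c(1-c)].
\end{align*}
This expression involves only $\mathcal{D}$, consistent with the abstract. For positivity, note that $c^2+(1-c)^2\ge\tfrac12$ on $[0,1]$, so $C\ge\tfrac12>0$. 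As sanity checks, the uniform cut gives $C=2/3$ and $\mathrm{Beta}(2,2)$ gives $C=3/5$, matching the examples.

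The only potential obstacle is the independence and joint-law step. I will handle it by induction: assume $(r_n,c_n)$ is independent with the stated marginals. Then $r_{n+1}=T(c_n,r_n)$ is measurable with respect to $\sigma(r_0,c_0,\dots,c_n)$, which is independent of $c_{n+1}$. The marginal of $r_{n+1}$ is uniform by \Cref{thm:stationarity}.
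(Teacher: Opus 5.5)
Your proof is correct and takes the same approach as the paper: stationarity of $\mathcal{U}(0,1)$ (\Cref{thm:stationarity}), together with the independence of $c_n$ from $r_n$, lets the one-step computation of $\mathbb{E}[\ell_1\mid r_0]$ from \Cref{sec:2.1} be reused at every iteration. Two details you add are left implicit or postponed in the paper: you check that $(c_n,r_n)$ has the product law $\mathrm{d}F\otimes\mathcal{U}(0,1)$, and you evaluate $C=1-2\,\mathbb{E}[c(1-c)]\geq\tfrac12$ directly by Fubini, whereas the paper obtains this formula and bound only later, through the CDF $H$ of $\ell$.
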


For instance, if $\mathcal{D} = \mathcal{U}(0,1)$, then it follows from \Cref{example - unif} that $\mathbb{E}[\ell_n] = \frac{2}{3}$ for all $n\geq 1$.

\subsection{The distribution of \texorpdfstring{$\ell_n$}{ℓₙ}}

\Cref{cor - constant} reveals that the expected length $\mathbb{E}[\ell_n]$ is always constant for all $n\geq 1$. We seek to determine a general formula for $\mathbb{E}[\ell_n]$ depending on an arbitrary cut distribution, i.e.,
\begin{equation*}
    \mathrm{cut}(0,1) = c \sim \mathcal{D}(0,1).
\end{equation*}
To do so, we begin by establishing a formula for the CDF and PDF (whenever it exists) of the random variables $\ell_n$. 
In particular, we show that the random variables $\ell_n$ have a distribution which is closely related to the one of the cuts $c_n$. 

\begin{thm}\label{thm - pdf}
    Suppose that $r \sim \mathcal{U}(0,1)$ and $c$ is a random variable with cumulative distribution function $F$. Define $\ell$ as in \eqref{def - l}. 
    Then the cumulative distribution function of $\ell$ is
    \[
    H(t)
    =
    \int_0^t x \,\mathrm{d} F(x)
    +
    \int_{1-t}^1 (1-x)\,\mathrm{d} F(x)
    \]
    Moreover, if $c$ has a probability density function $f(c)$, then $\ell$ has the probability density function
    \[
    h(t) = t\bigl(f(t)+f(1-t)\bigr).
    \]
\end{thm}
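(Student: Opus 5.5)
The plan is to condition on the cut $c$ and integrate out the uniform root $r$, exactly as in the proof of \Cref{thm:stationarity}. By \eqref{def - l}, for fixed $c$ we have $\ell=c$ on the event $\{r\le c\}$, which has conditional probability $c$. Likewise $\ell=1-c$ on $\{r>c\}$, which has conditional probability $1-c$; here $r\sim\mathcal U(0,1)$ and $r$ is independent of $c$, and ties $r=c$ have probability zero. Hence, for $t\in[0,1]$,
\[
\mathbb P[\ell\le t\mid c]=c\,\mathbf 1_{\{c\le t\}}+(1-c)\,\mathbf 1_{\{1-c\le t\}}.
\]
Taking expectations with respect to $\mathrm dF$ gives
\[
H(t)=\int_{[0,t]}x\,\mathrm dF(x)+\int_{[1-t,1]}(1-x)\,\mathrm dF(x),
\]
which is the claimed formula.

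The only delicate point is the endpoint convention in the Lebesgue--Stieltjes integrals. The first integral must include the atom at $x=t$, since $\{c\le t\}$ is closed. The second must include $x=1-t$, since $1-c\le t$ is equivalent to $c\ge 1-t$. I would state explicitly that the integrals run over the closed intervals $[0,t]$ and $[1-t,1]$, so that $H$ is right-continuous. A possible atom of $F$ at $0$ contributes nothing to the first integral because of the factor $x$, so no ambiguity arises there. As sanity checks, $H(1)=\int_0^1 x\,\mathrm dF+\int_0^1(1-x)\,\mathrm dF=1$, and $H(0)=\mathbb P[c=0]\cdot 0+\mathbb P[c=1]\cdot 0=0$.

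For the density, I would assume $F$ has density $f$, so that $\mathrm dF(x)=f(x)\,\mathrm dx$ and both integrals become ordinary Lebesgue integrals of integrable functions. Then $H$ is absolutely continuous. By the Lebesgue differentiation theorem, for almost every $t$ the first integral has derivative $t f(t)$. Applying the chain rule to the lower limit $1-t$, the second integral has derivative $-(1-(1-t))f(1-t)\cdot(-1)=t f(1-t)$. Adding the two gives $h(t)=t\bigl(f(t)+f(1-t)\bigr)$ almost everywhere, which suffices for a density. Alternatively, one can verify directly that $\int_0^t s\bigl(f(s)+f(1-s)\bigr)\,\mathrm ds=H(t)$ by the substitution $x=1-s$ in the second term; this avoids differentiation altogether, and I would present it as the main argument.

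I expect no real obstacle. The one thing to get right is rigour about atoms and closed versus open endpoints when $F$ is not continuous.
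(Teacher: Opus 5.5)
Your proof is correct and follows essentially the same route as the paper: condition on the cut $c=x$, use $\mathbb{P}[r\le x]=x$ and $\mathbb{P}[r>x]=1-x$ for the uniform root on the two disjoint events $\{r\le c\le t\}$ and $\{1-t\le c<r\}$, and integrate against $\mathrm{d}F$. Your explicit closed-interval convention for the Stieltjes integrals and your check of the density via the substitution $x=1-s$ (where the paper differentiates $H$) are minor refinements. They make the argument slightly more rigorous when $F$ has atoms or $f$ is merely integrable.
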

\begin{proof}
We first compute the cumulative distribution function of $\ell$. Let $t\in[0,1]$. By definition, we have $H(t)=\mathbb{P}[\ell\leq t]$. Now, since
\[
\ell=\begin{cases}
c & ~\text{if }\,c\geq r,\\
1-c & ~\text{if }\,c<r,
\end{cases}
\]
we have
\[
\{(c,r): \ell \leq t \} = \{(c,r): r\leq c \leq t\} \cup  \{(c,r):  1-t \leq c < r\},
\]
and these two events are disjoint (see \Cref{fig:region2}). Hence
\[
H(t)
=
\mathbb{P}[r\leq c\leq t]+\mathbb{P}[1-t\leq c < r].
\]
Since $r\sim \mathcal{U}(0,1)$, we have $\mathbb{P}[r\leq x\,|\, c=x] = x.$ Therefore,
\[
\mathbb{P}[r\leq c\leq t]
=
\int_0^t \mathbb{P}[r\leq x\,|\, c=x] \,\mathrm{d} F(x)
=
\int_0^t x \, \mathrm{d} F(x).
\]
Conditioning again on $c=x$, we have $\mathbb{P}[x\leq r\,|\, c=x]=1-x,$ from which it follows that
\[
\mathbb{P}[1-t \leq c < r]
=
\int_{1-t}^1 \mathbb{P}[x<r\,|\, c=x] \,\mathrm{d} F(x)
=
\int_{1-t}^1 (1-x)\,\mathrm{d} F(x).
\]
Combining the two parts then yields
\[
H(t)
=
\int_0^t x \,\mathrm{d} F(x)
+
\int_{1-t}^1 (1-x)\,\mathrm{d} F(x),
\qquad 0\le t\le 1.
\]
Moreover, assuming that $c$ has the PDF $f$, we have $\mathrm{d} F(x) = f(x)\,\mathrm{d} x$ and we finally find by differentiating with respect to $t$ that $\ell$ has the PDF
\begin{align*}
    h(t) = H'(t) = \frac{d}{dt} \bigg( \int_0^t x f(x)\,\mathrm{d} x
+
\int_{1-t}^1 (1-x)f(x)\,\mathrm{d} x \biggr) = t f(t) + t f(1-t). \qedhere
\end{align*}
\end{proof}

\begin{figure}[ht]
\begin{minipage}[t]{.48\linewidth}
\begin{figure}[H]
    \centering
    \begin{tikzpicture}[scale=5.1]
    
    \definecolor{bluefill}{RGB}{25,25,112}
    \definecolor{redfill}{rgb}{0.8, 0.25, 0.33}
    
    \def\t{0.4}
    
    \fill[bluefill, opacity=0.35]
        (0,\t) -- (1,1) -- (0,0) -- cycle;
    
    \fill[redfill, opacity=0.35]
        (0,0) -- (1,\t) -- (1,0) -- cycle;
    
    \draw[thick] (0,0) rectangle (1,1);
    
    \draw[bluefill, dashed, very thick] (1,1) -- (0,0);
    \draw[bluefill, very thick] (1,1) -- (0,\t);
    \draw[redfill, very thick] (0,0) -- (1,\t);
    
    \fill (0,\t) circle (0.01) node[left] {$(0,t)$};
    \fill (1,\t) circle (0.01) node[right] {$(1,t)$};
    
    \draw[thick] (-0.03,0) -- (1.05,0);
    \draw[thick] (0,-0.03) -- (0,1.05);

    \draw (1.05,0) node[right] {$c_0$};
    \draw (0,1.05) node[above] {$r_0$};
    
    \foreach \x/\lab in {0/0,0.5/0.5,1/1}
        \draw (\x,0) -- (\x,-0.012) node[below] {\lab};
    
    \foreach \y/\lab in {0.5/0.5,1/1}
        \draw (0,\y) -- (-0.012,\y) node[left] {\lab};
    
    \end{tikzpicture}
    \caption{ The region $L^-_t(T)$. The blue section corresponds to $r_0 > c_0$, and the red region corresponds to $r_0\leq c_0$.}
    \label{fig:regionLT}
\end{figure}
\end{minipage}%
\begin{minipage}{.04\linewidth}
\hfill
\end{minipage}%
\begin{minipage}[t]{.48\linewidth}
\begin{figure}[H]
    \centering
    \begin{tikzpicture}[scale=5.1]
    
    \definecolor{bluefill}{RGB}{25,25,112}
    \definecolor{redfill}{rgb}{0.8, 0.25, 0.33}
    
    \def\t{0.35}
    
    \fill[bluefill, opacity=0.35]
        (1-\t,1-\t) -- (1,1) -- (1-\t,1) -- cycle;
    
    \fill[redfill, opacity=0.35]
        (0,0) -- (\t,\t) -- (\t,0) -- cycle;
    
    \draw[thick] (0,0) rectangle (1,1);
    
    \draw[bluefill, very thick] (1-\t,1-\t) --  (1-\t,1);
    \draw[bluefill, dashed, very thick] (1,1) -- (1-\t,1-\t);
    \draw[redfill, very thick] (\t,0) -- (\t,\t);
    \draw[redfill, very thick] (\t,\t) -- (0,0);
    \draw[gray, dashed, thick] (1-\t,1-\t) -- (\t,\t);
    
    \fill (\t,0) circle (0.01) node[below] {$(t,0)$};
    \fill (1-\t,1) circle (0.01) node[above] {$(1-t,1)$};
    
    \draw[thick] (-0.03,0) -- (1.05,0);
    \draw[thick] (0,-0.03) -- (0,1.05);

    \draw (1.05,0) node[right] {$c$};
    \draw (0,1.05) node[above] {$r$};
    
    \foreach \x/\lab in {0/0,0.5/0.5,1/1}
        \draw (\x,0) -- (\x,-0.012) node[below] {\lab};
    
    \foreach \y/\lab in {0.5/0.5,1/1}
        \draw (0,\y) -- (-0.012,\y) node[left] {\lab};
    
    \end{tikzpicture}
    \caption{The region $\{(c,r): \ell \leq t \}$. The blue section corresponds to $r>c$, and the red region corresponds to $r\leq c$.}
    \label{fig:region2}
\end{figure}
\end{minipage}
\end{figure}

Recall that our main goal in this section was to compute $\mathbb{E}[\ell_n]$ for an arbitrary cut distribution. As a corollary to the previous theorem, we obtain an explicit formula for this quantity, as well as the variance of $\ell_n$. Surprisingly, we may express neatly both the mean and variance of the random variable $\ell$ as simple polynomials of the quantity $\mu -\mu^2 - \sigma^2 = \mathbb{E}[c(1-c)]$. 

\begin{cor}\label{cor - mean_variance_special}
    Suppose that $r \sim \mathcal{U}(0,1)$ and $c$ is a random variable with mean $\mu$ and variance $\sigma^2$. Then 
    \begin{equation}\label{eq - def mu ell}
    \begin{gathered}
        \mu_\ell = 1-2(\mu -\mu^2 - \sigma^2),\\
        \sigma_\ell^2 = (\mu -\mu^2 - \sigma^2) \bigl(1-4 (\mu -\mu^2 - \sigma^2)\bigr).
    \end{gathered}
    \end{equation}
    In particular, if $\mu=\frac{1}{2}$, then $\mu_\ell = \frac{1}{2}+2\sigma^2$ and $\sigma_\ell^2 = \sigma^{2}(1-4\sigma^{2})$.
\end{cor}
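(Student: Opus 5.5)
The plan is to compute the first two moments of $\ell$ directly from the CDF $H$ in \Cref{thm - pdf}, or equivalently by conditioning on $c$. Since the statement allows arbitrary $F$, I will avoid densities and use the conditional description already used in the proof of \Cref{thm - pdf}. Given $c=x$, the variable $\ell$ equals $x$ with probability $\mathbb{P}[r\le x]=x$ and equals $1-x$ with probability $1-x$. Hence for any measurable $g$,
\[
\mathbb{E}[g(\ell)] = \int_0^1 \bigl( x\,g(x) + (1-x)\,g(1-x)\bigr)\,\mathrm{d}F(x),
\]
which is the Lebesgue--Stieltjes pushforward form of $H$. (Equivalently, $\mathrm{d}H(t) = t\,\mathrm{d}F(t) + t\,\mathrm{d}\tilde F(t)$, where $\tilde F$ is the law of $1-c$.)

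\textbf{First moment.} Taking $g(t)=t$ gives
\[
\mu_\ell=\mathbb{E}\bigl[c^2+(1-c)^2\bigr]=\mathbb{E}\bigl[1-2c(1-c)\bigr]=1-2\kappa,
\]
where $\kappa:=\mathbb{E}[c(1-c)]$. Expanding, $\mathbb{E}[c^2]=\sigma^2+\mu^2$, so $\kappa=\mu-\mu^2-\sigma^2$, which is the first formula.

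\textbf{Second moment.} Taking $g(t)=t^2$ gives $\mathbb{E}[\ell^2]=\mathbb{E}\bigl[c^3+(1-c)^3\bigr]$. Using $a^3+b^3=(a+b)^3-3ab(a+b)$ with $a+b=1$, this equals $1-3\kappa$. Then
\[
\sigma_\ell^2=1-3\kappa-(1-2\kappa)^2=\kappa-4\kappa^2=\kappa(1-4\kappa),
\]
as claimed.

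\textbf{Special case.} If $\mu=\tfrac12$, then $\kappa=\tfrac14-\sigma^2$. So $\mu_\ell=\tfrac12+2\sigma^2$, and $1-4\kappa=4\sigma^2$, giving $\sigma_\ell^2=(\tfrac14-\sigma^2)\,4\sigma^2=\sigma^2(1-4\sigma^2)$.

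The only step needing care is justifying the conditional-expectation formula for general (possibly atomic) $F$. This follows from Fubini and the independence of $r$ and $c$: since $r$ is uniform, the tie event $r=c$ has probability zero, so the convention $c\ge r$ versus $c<r$ does not matter. The remaining algebra is routine.
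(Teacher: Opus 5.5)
Your proof is correct and follows essentially the paper's route. Both arguments reduce to $\mu_\ell=\int_0^1 \bigl(x^2+(1-x)^2\bigr)\,\mathrm{d}F(x)$ and $\mathbb{E}[\ell^2]=\int_0^1 \bigl(x^3+(1-x)^3\bigr)\,\mathrm{d}F(x)$. The paper gets there via $\mathrm{d}H(t)=t\,\mathrm{d}F(t)+t\,\mathrm{d}F(1-t)$ and a change of variables, while you condition on $c$ directly. Your formulation, with $\tilde F$ the law of $1-c$, states the measure more carefully for general $F$, and writing everything in terms of $\kappa=\mathbb{E}[c(1-c)]$ shortens the variance algebra.
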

\begin{proof}
    Since $H(t) = \int_0^t x \,\mathrm{d} F(x) + \int_{1-t}^1 (1-x)\,\mathrm{d} F(x)$, it follows from a simple change of variable and the Fundamental Theorem of Calculus that
    \[
    \mathrm{d} H(t) = t \,\mathrm{d} F(t) + t\,\mathrm{d} F(1-t).
    \]
    Hence, \Cref{thm - pdf} and a second change of variables yields
    \begin{align*}
        \mu_\ell &= \int_0^1 x \,\mathrm{d} H(x) = \int_0^1 x^2 \,\mathrm{d} F(x) + \int_0^1 x^2 \,\mathrm{d} F(1-x) \\
        &= \int_0^1 x^2 \,\mathrm{d} F(x) + \int_0^1 (1-x)^2 \,\mathrm{d} F(x) = \int_0^1 \bigl(1-2x+2x^2\bigr) \,\mathrm{d} F(x).
    \end{align*}
    Since $\int_0^1 x^2 \,\mathrm{d} F(x) = \mu^2 + \sigma^2$, it follows that
    \begin{align*}
        \mu_\ell &= 1-2\mu+2(\mu^2+\sigma^2) = 1-2(\mu-\mu^2-\sigma^2).
    \end{align*}
    For the variance, we obtain with similar arguments as before that
    \begin{align*}
        \sigma_\ell^2+\mu_\ell^2 &= \int_0^1 x^2 \,\mathrm{d} H(x) = \int_0^1 x^3\,\mathrm{d} F(x) + \int_0^1 (1-x)^3\,\mathrm{d} F(x) \\
        &= \int_0^1 \bigl(3 x^2 - 3 x + 1 \bigr)\,\mathrm{d} F(x) = 3(\mu^2+\sigma^2)-3\mu+1.
    \end{align*}
    Therefore, it finally follows that
    \begin{align*}
        \sigma_\ell^2 &= 3(\mu^2+\sigma^2)-3\mu+1 - \mu_\ell^2 \\
        &= 3(\mu^2+\sigma^2)-3\mu+1 - (1-2\mu+2\mu^2+2\sigma^2)^2 \\
        &= (\mu -\mu^2 - \sigma^2) \bigl(1-4 (\mu -\mu^2 - \sigma^2)\bigr). \qedhere
    \end{align*}
\end{proof}

Remark that in the particularly natural case where the cuts are taken from a symmetric distribution, we obtain the elegant result that
\[
\mathbb{E}[\ell_{n}] = \frac{1}{2}+2\sigma^2, \qquad n\geq 1.
\]
In particular, this shows that for symmetric distributions, the length of the interval at each step of the algorithm is reduced on average by a factor of $\frac{1}{2}+2\sigma^2$, which is minimal when $\sigma^2 \to 0$. This corresponds precisely to the case of the classical bisection algorithm. \Cref{fig:muell} shows graphically the relations for $\mu_\ell$ and $\sigma^2_\ell$ in this case.

More generally, observe that $0 \le c(1-c) \le \frac14$ for all $c\in[0,1]$. Since $1-2\mu+2\mu^2+2\sigma^2 = 1-2\,\mathbb{E}[c(1-c)]$, taking expectations thus yields
\[
\frac{1}{2} \le \mu_\ell \le 1,
\]
with equality above if and only if $c\in\{0,1\}$ almost surely, and equality below if and only if $c=\frac{1}{2}$ almost surely (i.e., the classical bisection algorithm). Therefore, the quantity $\mathbb{E}[\ell_n]$ is minimized \emph{uniquely} (up to a set of measure zero) by the classical bisection algorithm.

\begin{cor}
    The classical bisection algorithm is optimal in the family of stochastic bisection algorithms.
\end{cor}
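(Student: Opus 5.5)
The statement is a direct consequence of \Cref{thm:stationarity}, \Cref{cor - constant} and \Cref{cor - mean_variance_special}. The first thing I will do is fix what ``optimal'' means: under the uniform prior $r\sim\mathcal{U}(0,1)$, the classical bisection algorithm minimizes the expected contraction factor $\mathbb{E}[\ell_n]$ at every step among all cut distributions $\mathcal{D}$ on $[0,1]$, and it is the unique minimizer up to null sets. Because the cuts are i.i.d. and independent of the normalized roots, and because \Cref{thm:stationarity} keeps every $r_n$ uniform, I also expect to obtain optimality for the expected final length
\[
\mathbb{E}[L_n]=\mathbb{E}[\ell_1\cdots\ell_n].
\]

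The per-step claim comes straight from \Cref{cor - mean_variance_special}, which gives
\[
\mathbb{E}[\ell_n]=1-2\,\mathbb{E}[c(1-c)].
\]
The pointwise bound $c(1-c)\le\frac14$ then yields $\mathbb{E}[\ell_n]\ge\frac12$. Equality holds if and only if $\frac14-c(1-c)=(c-\frac12)^2$ vanishes almost surely, that is, if and only if $c=\frac12$ almost surely, which is exactly \Cref{alg:bisection}. By \Cref{cor - constant} this value is the same for every $n\ge1$, so the comparison holds uniformly in $n$.

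For $\mathbb{E}[L_n]$, I will argue by conditioning, since the $\ell_k$ are not independent in general. The key observation is that the proof of \Cref{thm:stationarity} applies conditionally on $c_0,\dots,c_{k-1}$: the lower level set computation gives $\mathbb{P}[r_{k}\le t\mid c_0,\dots,c_{k-1}]=t$. Given $(c_0,\dots,c_{k-1})$ and $r_k$ uniform, the factor $\ell_{k+1}$ equals $c_k$ with probability $c_k$ and $1-c_k$ otherwise. Hence
\[
\mathbb{E}[\ell_{k+1}\mid \ell_1,\dots,\ell_k]=\mathbb{E}[c^2+(1-c)^2]=\mu_\ell.
\]
Inducting on $n$ then gives $\mathbb{E}[L_n]=\mu_\ell^{\,n}\ge 2^{-n}$, with equality if and only if $c=\frac12$ almost surely. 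The main obstacle is precisely this conditional-independence step: it requires checking that $r_k$ remains uniform conditionally on the past cuts, and not merely marginally uniform. I will handle it by noting that, conditionally on $c_0=x$, the map $r\mapsto T(x,r)$ pushes $\mathcal{U}(0,1)$ forward to $\mathcal{U}(0,1)$; this is the inner integral in the proof of \Cref{thm:stationarity}, before integrating against $\mathrm{d}F$.
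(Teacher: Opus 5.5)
Your per-step argument is the paper's proof. The paper derives the corollary from $\mu_\ell = 1-2\,\mathbb{E}[c(1-c)]$ (\Cref{cor - mean_variance_special}) and the pointwise bound $c(1-c)\le\frac14$, with $\mu_\ell=\frac12$ if and only if $c=\frac12$ almost surely. That alone proves the statement, and your $(c-\frac12)^2$ identity handles the equality case cleanly.

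The extension to $\mathbb{E}[L_n]$ has a gap at exactly the step you flag.

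\emph{What you show versus what you use.} You show that $r_k$ is uniform conditionally on $(c_0,\dots,c_{k-1})$. You then use this to compute $\mathbb{E}[\ell_{k+1}\mid \ell_1,\dots,\ell_k]$. But the $\ell_j$ are not functions of the cuts alone. Since $\ell_1$ equals $c_0$ or $1-c_0$ according to whether $r_0\le c_0$, the pair $(c_0,\ell_1)$ reveals which branch was taken whenever $c_0\neq\frac12$. Hence $\sigma(\ell_1,\dots,\ell_k)\not\subseteq\sigma(c_0,\dots,c_{k-1})$, and uniformity given the cuts does not justify the tower-property step.

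\emph{Why your pushforward fact is too weak.} The fact that $r\mapsto T(x,r)$ maps $\mathcal{U}(0,1)$ to $\mathcal{U}(0,1)$ only concerns the mixture of the two branches. What you need is that each branch separately maps its conditional law, $\mathcal{U}(0,x)$ or $\mathcal{U}(x,1)$, onto $\mathcal{U}(0,1)$. Then $r_k$ stays uniform given the past cuts \emph{and} the past branch choices. That is precisely the content of \Cref{thm - ind}.

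\emph{Your premise is wrong here.} \Cref{thm - ind} also shows that under the uniform prior the $\ell_k$ are mutually independent. \Cref{cor - final} then gives $\mathbb{E}[L_n]=\mu_\ell^{\,n}$ directly. So $\mathbb{E}[L_n]\ge 2^{-n}$, with equality if and only if $c=\frac12$ almost surely, follows at once from the per-step bound.

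\emph{A repair that conditions on cuts only.} If you want to avoid \Cref{thm - ind}, compute $\mathbb{E}[L_n\mid c_0,\dots,c_{n-1}]=\prod_{j=0}^{n-1}\bigl(c_j^2+(1-c_j)^2\bigr)$ directly. Given the cuts, the $2^n$ possible final intervals partition $[0,1]$, and $r_0$ falls in each with probability equal to its length. Taking expectations over the i.i.d. cuts gives $\mu_\ell^{\,n}$.
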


\subsection{Properties of \texorpdfstring{$L_n$}{Lₙ}}

The random variable $L_n$ generated by \Cref{alg:RRbisection} corresponds to the length of the working interval after $n$ iterations. We are interested in the expected value of this length. To do so, some independence property has to be shown first to compute explicitly this expectation.

\begin{thm}\label{thm - ind}
    Given $r\sim \mathcal{U}(0,1)$, then:
    \begin{itemize}
        \item[(i)] $r_n$ is independent from $\ell_n$, for all $n>0$;
        \item[(ii)] the random variables $\ell_1,\ell_2,\dots,$ are mutually independent.
    \end{itemize}
\end{thm}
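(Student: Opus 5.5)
The plan is to first prove a one-step statement and then propagate it by induction using the Markov structure of \Cref{alg:RRbisection}.

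\textbf{Step 1 (one step).} Let $r_0\sim\mathcal U(0,1)$ and let $c_0$ be independent of $r_0$ with CDF $F$. We show that $(\ell_1,r_1)$ are independent, with $r_1\sim\mathcal U(0,1)$. Fix $s,t\in[0,1]$ and compute $\mathbb P[\ell_1\le s,\ r_1\le t]$ by conditioning on $c_0=x$, as in the proofs of \Cref{thm:stationarity} and \Cref{thm - pdf}.

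\begin{itemize}
\item On the branch $r_0\le x$ we have $\ell_1=x$ and $r_1=r_0/x$. Given $c_0=x$, the event $\{r_0\le x,\ r_1\le t\}$ is $\{r_0\le tx\}$, which has probability $tx$. This branch contributes $\mathbf 1_{\{x\le s\}}\,tx$.
\item On the branch $r_0>x$ we have $\ell_1=1-x$ and $r_1=(r_0-x)/(1-x)$. The event $\{r_1\le t\}$ has conditional probability $t(1-x)$. This branch contributes $\mathbf 1_{\{1-x\le s\}}\,t(1-x)$.
\end{itemize}

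Integrating against $\mathrm dF(x)$ gives
\[
\mathbb P[\ell_1\le s,\ r_1\le t]=t\Bigl(\int_0^s x\,\mathrm dF(x)+\int_{1-s}^1(1-x)\,\mathrm dF(x)\Bigr)=t\,H(s),
\]
using \Cref{thm - pdf}. The joint CDF therefore factorizes, which proves independence.

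\textbf{Step 2 (item (i)).} By \Cref{thm:stationarity}, $r_{n-1}\sim\mathcal U(0,1)$. Moreover, $c_{n-1}$ is independent of $r_{n-1}$, and $(\ell_n,r_n)$ is the same measurable function of $(c_{n-1},r_{n-1})$ as $(\ell_1,r_1)$ is of $(c_0,r_0)$. Step 1 then gives (i) for every $n$.

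\textbf{Step 3 (item (ii)).} We prove by induction the stronger claim that $(\ell_1,\dots,\ell_n,r_n)$ are mutually independent, with $r_n\sim\mathcal U(0,1)$; the base case is Step 1. For the inductive step, write $(\ell_{n+1},r_{n+1})=\Phi(c_n,r_n)$. The cut $c_n$ is drawn independently of everything generated so far, so $c_n$ is independent of $(\ell_1,\dots,\ell_n,r_n)$. Combined with the induction hypothesis, the family $\ell_1,\dots,\ell_n,r_n,c_n$ is mutually independent. Hence $(\ell_1,\dots,\ell_n)$ is independent of the pair $(r_n,c_n)$, and so of $\Phi(c_n,r_n)=(\ell_{n+1},r_{n+1})$. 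Step 1, applied to $(r_n,c_n)$, shows that $\ell_{n+1}$ and $r_{n+1}$ are independent of each other, with $r_{n+1}$ uniform. Putting these together, for Borel sets $A_1,\dots,A_{n+1},B$,
\[
\mathbb P\Bigl[\textstyle\bigcap_{i\le n+1}\{\ell_i\in A_i\},\ r_{n+1}\in B\Bigr]=\prod_{i\le n}\mathbb P[\ell_i\in A_i]\cdot\mathbb P[\ell_{n+1}\in A_{n+1}]\,\mathbb P[r_{n+1}\in B],
\]
which completes the induction. Dropping $r_n$ gives (ii).

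\textbf{Main obstacle.} The main difficulty is keeping track of $r_n$ in the induction. Pairwise independence of the $\ell_i$ would not suffice, and $\ell_{n+1}$ is a function of $r_n$, which depends on the past. The strengthened hypothesis, which carries $r_n$ as an independent uniform variable alongside the $\ell_i$, is what resolves this. We also need to state explicitly the modeling assumption that $c_n$ is independent of $(c_0,\dots,c_{n-1},r_0)$.
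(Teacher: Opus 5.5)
Your proof is correct and follows essentially the same route as the paper. Both compute the one-step factorization $\mathbb{P}[\ell_1\le s,\ r_1\le t]=t\,H(s)$ by conditioning on the cut, transfer it to every $n$ via \Cref{thm:stationarity}, and prove (ii) by induction. Your strengthened hypothesis, that $(\ell_1,\dots,\ell_n,r_n)$ is mutually independent, is worth keeping. With it you explicitly check that $c_n$ is independent of $(\ell_1,\dots,\ell_n,r_n)$ jointly, and that $r_{n+1}$ stays independent of $(\ell_1,\dots,\ell_{n+1})$. The paper's induction assumes the latter but never re-verifies it, so your version closes that step.
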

\begin{proof}
We seek to compute the probability $\mathbb{P}[r_1 \leq t,\, \ell_1 \leq u]$ and show that the joint distribution of $r_1$ and $\ell_1$ is $\mathbb{P}[r_1 \leq t] \,\mathbb{P}[\ell_1 \leq  u]$. Since $\{r_0\leq c_0\}$ and $\{r_0>c_0\}$ are complementary events, we can partition the probability in two parts:
\begin{align*}
    \mathbb{P}[r_1 \leq t,\, \ell_1 \leq u] &= \mathbb{P}[r_1 \leq t,\, \ell_1 \leq u ,\, r_0\leq c_0] + \mathbb{P}[r_1 \leq t,\, \ell_1 \leq u ,\, r_0>c_0] \\
    &= \mathbb{P}[c_0 \leq u,\, r_0 \leq tc_0] + \mathbb{P}[1-u \leq c_0 ,\, r_0 \leq t(1-c_0)+ c_0 ,\, r_0 > c_0] .
\end{align*}
The first term bounds a triangular region $\{c_0\in [0,u],\, r_0 \in [0, tc_0]\}$ (\Cref{fig:regionthm4}). Since $c_0$ and $r_0$ are independent, and since $r_0\sim \mathcal{U}(0,1)$, integrating over this region yields\vspace{-2pt}
\begin{align*}
    \mathbb{P}[c_0 \leq u,\, r_0 \leq tc_0] &= \int_0^u \mathbb{P}[ r_0 \leq tc_0 \,|\, c_0] \,\mathrm{d} F(c_0) = t\int_0^{u}  c_0 \,\mathrm{d} F(c_0).\vspace{-2pt}
\end{align*}
Similarly, the second term bounds the triangular region $\{c_0\in[1-u,1],\, r_0\in(c_0,t(1-c_0)+c_0] \}$. Since $r_0 $ is uniform on $(0,1)$, we have $\mathbb{P}[c_0 <r_0 < t(1-c_0)+c_0 \,|\, c_0] = t(1-c_0)$ and thus\vspace{-2pt}
\begin{align*}
    \mathbb{P}[1-u \leq c_0 ,\, r_0 \leq t(1-c_0)+ c_0 ,\, r_0 > c_0] &= \int_{1-u}^1 \mathbb{P}[c_0 < r_0 \leq t(1-c_0)+c_0 \,|\, c_0] \,\mathrm{d} F(c_0)\\
    &= t\int_{1-u}^1(1-c_0) \,\mathrm{d} F(c_0).\\[-22pt]
\end{align*}
Therefore, the total probability becomes\vspace{-2pt}
\begin{align*}
    \mathbb{P}[r_1 \leq t,\, \ell_1 \leq u] = t\bigg(\int_0^{u} c_0 \,\mathrm{d} F(c_0) + \int_{1-u}^1(1-c_0) \,\mathrm{d} F(c_0) \biggr) = tH(u) = \mathbb{P}[r_1 \leq t] \,\mathbb{P}[\ell_1 \leq u],
\end{align*}
where $H$ is the CDF of $\ell$ taken from \Cref{thm - pdf}. Thus, $r_1$ is independent from $\ell_1$. Moreover, for all $n>0$,
\begin{enumerate}
    \item $r_n \sim \mathcal{U}(0,1)$ (\Cref{thm:stationarity});
    \item $c_n$ has CDF $F$;
    \item $c_n$ is independent from $r_n$.
\end{enumerate}
Hence, the computation above can be repeated, mutatis mutandis, with ($r_{n+1}$, $\ell_{n+1}$) instead of ($r_1$, $\ell_1$), showing that $r_{n+1}$ and $\ell_{n+1}$ are independent for all $n>0$. 

From there, we can form an argument for the independence of all $\ell_n$ based on strong induction. Assume that $r_n$ is jointly independent from $(\ell_i)_{i=1}^n$. The base case is validated by the computations above. Moreover, $c_n$ is also clearly jointly independent from the tuple of all $\ell_i$, meaning $(c_n,r_n)$ is jointly independent from $(\ell_i)_{i=1}^n$. Now, $\ell_{n+1}=\ell({c_n,r_n})$ depends only upon $r_n$ and $c_n$ in its construction.  Therefore, for any measurable $U,V$,
\begin{align*}
    \mathbb{P}[\ell_{n+1} \in U, (\ell_1,\dots,\ell_n) \in V] = \mathbb{P}[\ell(c_n,r_n)\in U]\mathbb{P}[ (\ell_1,\dots,\ell_n) \in V].
\end{align*}
Thus, $\ell_{n+1}$ is independent of $(\ell_1,\dots,\ell_n) $. Since $\ell_1,\dots,\ell_n$ are mutually independent by the induction hypothesis, it follows that $\ell_1,\dots,\ell_{n+1}$ are mutually independent for all $n>0$.
\end{proof}

\begin{figure}[ht]
\begin{minipage}[t]{.48\linewidth}
\vspace{19pt}
\begin{figure}[H]
    \centering
    \begin{tikzpicture}[scale=.8]
    \definecolor{bluefill}{RGB}{25,25,112} 
    \definecolor{redfill}{rgb}{0.8, 0.25, 0.33} 
    
    \begin{axis}[
        width=9cm, height=8.2cm,
        axis lines = middle,
        x label style={at={(axis description cs:1.01,.1)},anchor=west},
        xlabel = {$\sigma^2$},
        xmin = -0.03, xmax = 0.28,
        ymin = -0.09, ymax = 1.1,
        xtick={0,0.05,0.1,0.15,0.20,0.25},
        xticklabels={0,0.05,0.1,0.15,0.20,0.25},
        samples = 400,
        domain = 0:0.25,
    ]
    \addplot[bluefill, very thick] {1/2 + 2*x};
    \addplot[redfill, very thick] {sqrt(x*(1-4*x))};

    \node at (axis cs:.125,.3) {$\sigma_\ell$};
    \node at (axis cs:.125,.68) {$\mu_\ell$};
    \end{axis}
    \end{tikzpicture}
    \caption{The mean $\mu_\ell$ and standard deviation $\sigma_\ell$ of 
    $\ell=\ell(c,r)$ in terms of the variance $\sigma^2$ of the \emph{symmetric} random variable $c$.}
    \label{fig:muell}
\end{figure}
\end{minipage}%
\begin{minipage}{.04\linewidth}
\hfill
\end{minipage}%
\begin{minipage}[t]{.48\linewidth}
    \begin{figure}[H]
    \centering
    \begin{tikzpicture}[scale=4.8]
    
    \definecolor{bluefill}{RGB}{25,25,112} 
    \definecolor{redfill}{rgb}{0.8, 0.25, 0.33} 
    
    \def\t{0.4}
    \def\u{0.35}
    
    \fill[bluefill, opacity=0.35]
        (1-\u,\t*\u+1-\u) -- (1,1) -- (1-\u,1-\u) -- cycle;
    
    \fill[redfill, opacity=0.35]
        (0,0) -- (\u,\t*\u) -- (\u,0) -- cycle;
    
    \draw[thick] (0,0) rectangle (1,1);
    
    \draw[bluefill, dashed, very thick] (1-\u,\t*\u+1-\u) -- (1-\u,1-\u) -- (1,1) -- cycle;
    \draw[redfill, dashed, very thick] (0,0) -- (\u,\t*\u) -- (\u,0);

    \draw[lightgray, dashed, thick] (1,\t) -- (\u,\t*\u);
    \draw[lightgray, dashed, thick] (0,0) -- (1-\u,1-\u);
    \draw[lightgray, dashed, thick] (0,\t) -- (1-\u,\t*\u+1-\u);
    \draw[lightgray, dashed, thick] (1-\u,1) -- (1-\u,\t*\u+1-\u);

    \fill (1-\u,1) circle (0.01) node[above] {$(1-u,1)$};
    \fill (0,\t) circle (0.01) node[left] {$(0,t)$};
    \fill (1,\t) circle (0.01) node[right] {$(1,t)$};
    \fill (\u,0) circle (0.01) node[below] {$(u,0)$};

    \draw[thick] (-0.03,0) -- (1.05,0);
    \draw[thick] (0,-0.03) -- (0,1.05);

    \draw (1.05,0) node[right] {$c_0$};
    \draw (0,1.05) node[above] {$r_0$};
    
    \foreach \x/\lab in {0/0,0.5/0.5,1/1}
        \draw (\x,0) -- (\x,-0.012) node[below] {\lab};
    
    \foreach \y/\lab in {0.5/0.5,1/1}
        \draw (0,\y) -- (-0.012,\y) node[left] {\lab};
    
    \end{tikzpicture}
    \caption{The regions in the integrals of \Cref{thm - ind}. Note that they correspond to the intersection of \Cref{fig:regionLT,fig:region2}.}
    \label{fig:regionthm4}
\end{figure}
\end{minipage}
\end{figure}

\begin{cor}\label{cor - final}
The expected length of the $n$-th interval is
\begin{equation*}
    \mathbb{E}[L_n] = \mathbb{E}[\ell_1]^n.
\end{equation*}
\end{cor}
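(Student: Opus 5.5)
The plan is to write $L_n$ as a product and then use the independence and stationarity results already established. In \Cref{alg:RRbisection}, $L_0=1$ and $L_{k+1}=\ell_{k+1}L_k$, so a straightforward induction on $n$ gives
\[
L_n=\prod_{k=1}^{n}\ell_k .
\]
Each factor $\ell_k$ takes values in $[0,1]$, so all the expectations below are finite and no integrability issues arise.

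Next we invoke \Cref{thm - ind}(ii), which says that $\ell_1,\dots,\ell_n$ are mutually independent when $r\sim\mathcal{U}(0,1)$. For independent bounded random variables, the expectation of the product equals the product of the expectations. Hence
\[
\mathbb{E}[L_n]=\prod_{k=1}^{n}\mathbb{E}[\ell_k].
\]

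Finally, we need all the factors to be equal. By \Cref{thm:stationarity}, each $r_{k-1}$ is uniform on $(0,1)$. Moreover, $c_{k-1}$ has CDF $F$ and is independent of $r_{k-1}$. Consequently $\ell_k=\ell(c_{k-1},r_{k-1})$ has the same distribution as $\ell_1$, namely the CDF $H$ of \Cref{thm - pdf}. This is exactly the content of \Cref{cor - constant}. Therefore $\mathbb{E}[\ell_k]=\mathbb{E}[\ell_1]$ for every $k$, which gives $\mathbb{E}[L_n]=\mathbb{E}[\ell_1]^n$. By \Cref{cor - mean_variance_special}, this equals $\bigl(1-2\,\mathbb{E}[c(1-c)]\bigr)^n$.

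There is no real obstacle here. The only substantive input is the mutual independence in \Cref{thm - ind}(ii), which has already been proved. Pairwise independence alone would not be enough to factor the expectation of an $n$-fold product, so it matters that the theorem gives full mutual independence.
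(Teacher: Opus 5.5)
Your proof is correct and matches the paper's argument: write $L_n=\prod_{k=1}^n\ell_k$, factor the expectation using the mutual independence in \Cref{thm - ind}(ii), and identify each factor with $\mathbb{E}[\ell_1]$ via \Cref{cor - constant}. Your explicit unrolling of the recursion $L_{k+1}=\ell_{k+1}L_k$ and your boundedness remark simply make explicit what the paper leaves implicit.
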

\begin{proof}
    Since the $\ell_i$ ($i>0$) are all independent, it directly follows from \Cref{cor - constant} that
    \begin{equation*}
        \mathbb{E}[L_n] = \mathbb{E}\bigg[\prod_{i=1}^n \ell_i\biggr] = \prod_{i=1}^n \mathbb{E}[\ell_i] = \mathbb{E}[\ell_1]^n. \qedhere
    \end{equation*}
\end{proof}

This corollary implies that on average, when $r\sim\mathcal{U}(0,1)$, the length of the interval after $n$ iterations has length $C^n$, just like the deterministic bisection method with $C=1/2$.

\section{Arbitrary root distribution}\label{sec:3}

    We now study the case where the initial root $r_0$ is not uniformly distributed. In this case, the roots $r_n$ need not share the same distribution, which complicates each step of the above analysis. Nevertheless, under mild hypotheses, we recover essentially the same conclusion by showing that the distribution of $r_n$ converges uniformly, in the appropriate sense, to the uniform distribution.

    Indeed, it is easily seen that the conclusion does not hold in full generality. Consider for instance the case where the cuts $c_n$ are almost surely $0$ or $1$. Then at each step of the process, $r_{n+1}$ is almost surely equal to $r_n$. Hence, if $r_0$ is not initially uniform, then it is clearly impossible for the root to converge to the uniform distribution if $\mathbb{P}[0<c<1]=0$.

    However, even if the cuts are chosen according to a distribution satisfying $\mathbb{P}[0<c<1]>0$, the roots may still not converge to the uniform distribution. Consider for instance the case where $c_n$ is almost surely equal to $1/2$ and $r_0=1/3$ with probability $p$ and $r_0=2/3$ with probability $1-p$. Then it is easily seen that $r_1$ is then equal to $2/3$ with probability $p$ and $1/3$ with probability $1-p$. Proceeding iteratively, one then concludes that
    \[
    \mathbb{P}\bigl[r_n=\tfrac{1}{3}\bigr] = \begin{cases}
        p &\text{if } n\text{ is even},\\
        1-p &\text{if } n\text{ is odd},
    \end{cases} \qquad\&\qquad \mathbb{P}\bigl[r_n=\tfrac{2}{3}\bigr] = \begin{cases}
        1-p &\text{if } n\text{ is even},\\
        p &\text{if } n\text{ is odd}.
    \end{cases} 
    \]
    Once again, the distribution of $r_n$ does not converge to the uniform distribution.

    The previous examples were somewhat artificial and of limited practical use. Hence, one may naturally conjecture that the convergence to the uniform distribution will occur if we wander outside of these pathological cases, for instance when $r_0$ is not a discrete distribution. However, this is also not the case. Indeed, consider the case where $c_n$ is almost surely $1/2$ and
    \begin{equation}\label{eq - weird}
        r_0 = \sum_{k=1}^\infty 2^{-k}B_k,
    \end{equation}
    with $B_k$ being i.i.d. Bernoulli$(p)$ random variables with $p\neq 1/2$. Equivalently, if $r_0$ is written in binary form $0.b_1b_2b_3\dots$, then the digits $b_i$ are chosen independently with $\mathbb{P}(b_i=1)=p$ and $\mathbb{P}(b_i=0)=1-p$. It is easily seen that the CDF of $r_0$ is continuous and thus that $r_0$ has no point masses. However, this distribution is not uniform, since
    \[
    \mathbb{E}[r_0] = \sum_{k=1}^\infty 2^{-k}\cdot p = p\neq \frac{1}{2}.
    \]
    The CDF of $r_0$ has a fractal (in particular, self-similar) structure that is shown in \Cref{fig - fractal}. 
    Since for this choice of distribution of $c_n$, $T$ corresponds precisely to the deterministic Dyadic map, it is known that the bits in the binary expansion of $r_n$ will be shifted to the left, that is
    \begin{align*}
        r_n = 0.b_1b_2b_3\dots ~~\mapsto~~ r_{n+1} = 0.b_2b_3b_4\dots
    \end{align*}
    Since they were all i.i.d., we find that the distribution of $r_{n+1}$ is identical to the one of $r_n$. Proceeding inductively, it follows that the distribution of \eqref{eq - weird} is a stationary distribution of the process and thus that it never converges to the uniform distribution.

    \begin{figure}
        \centering
        \includegraphics[width=0.5\linewidth]{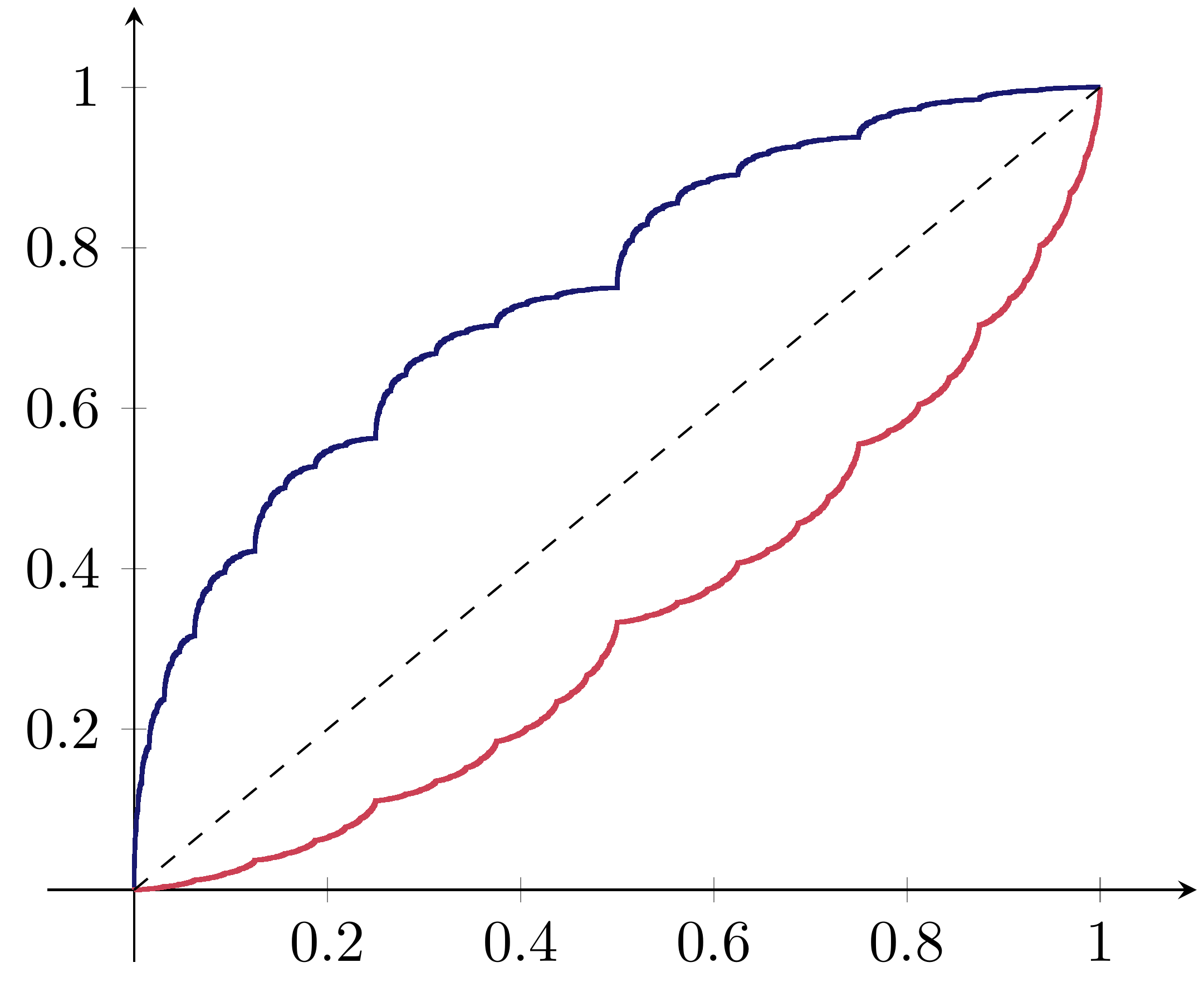}
        \caption{The cumulative distribution function of $r_0$ defined in \eqref{eq - weird} with $p=\frac{1}{4}$ in blue (above the dashed uniform CDF) and $p=\frac{2}{3}$ in red (below the dashed uniform CDF).}
        \label{fig - fractal}
    \end{figure}

    The previous examples show that the distribution of $r_n$ may not converge to the uniform distribution. However, we may show that it always does if $r_0$ is \emph{absolutely continuous}, i.e., if it admits a probability density function. Hence, denote by $F$ the CDF of the cuts $c_n$, by $G_n$ the CDF of the rescaled roots $r_n$, and assume that $r_0$ has PDF $g_0$, that is\vspace{-1pt}
    \[
    G_0(t)=\int_0^t g_0(u)\,\mathrm{d}u .\vspace{-1pt}
    \]
    Proceeding as in the proof of \Cref{thm:stationarity}, the CDF $G_{n+1}$ of $r_{n+1}$ is given by\vspace{-1pt}
    \begin{align}
        G_{n+1}(t) = (\mathcal{T}G_n)(t) = \int_0^1
        \bigl(G_n(tc)+G_n(t+(1-t)c)-G_n(c)\bigr)
        \,\mathrm{d}F(c).\vspace{-1pt}
        \label{eq - operator}
    \end{align}
    If $G_n$ is absolutely continuous with density $g_n$, then\vspace{-1pt}
    \begin{align*}
        G_{n+1}(t)
        &= \int_0^1
        \!\left(
            \int_0^{tc} g_n(r)\,\mathrm{d}r
            + \int_c^{t+(1-t)c} g_n(r)\,\mathrm{d}r
        \right)
        \mathrm{d}F(c) \\
        &= \int_0^t
        \int_0^1
        \bigl(
            c g_n(cu)
            +(1-c)g_n(c+(1-c)u)
        \bigr)
        \,\mathrm{d}F(c)\,\mathrm{d}u .\vspace{-1pt}
    \end{align*}
    Hence, $G_{n+1}$ is also absolutely continuous with density\vspace{-1pt}
    \begin{equation}\label{eq - PDF}
        g_{n+1}(u)
        = \mathcal{L}g_n(u)
        := \int_0^1
        \bigl(
            c g_n(cu)
            +(1-c)g_n(c+(1-c)u)
        \bigr)
        \,\mathrm{d}F(c).
    \end{equation}
    Therefore, since $G_0$ is absolutely continuous by hypothesis, it follows by induction that $r_n$ has a PDF $g_n$ for every $n\geq 0$, and that the densities satisfy $g_{n+1}=\mathcal{L}g_n$.

\begin{thm}\label{thm - root_conv}
Let $F$ be the cumulative distribution functions of the cuts $c_n$ and $g_n$ the probability density functions of the absolutely continuous normalized roots $r_n$. If $\mathbb{P}[0<c<1]>0$, then\vspace{-1.41pt}
\[
\|g_n-1\|_{L^1}\longrightarrow 0.\vspace{-1.41pt}
\]
Moreover, we have $\|g_n-1\|_{L^1} \leq \varepsilon + O(\mu_\ell^n)$ for all $\varepsilon>0$, where $\mu_\ell$ is defined in \eqref{eq - def mu ell}.
\end{thm}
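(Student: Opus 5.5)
The plan is to treat $\mathcal{L}$ from \eqref{eq - PDF} as a Markov (Frobenius--Perron type) operator on $L^1(0,1)$, combine an $L^1$-contraction property with a derivative-contraction estimate on smooth densities, and conclude by a density argument. The constant $\mu_\ell$ from \eqref{eq - def mu ell} should appear naturally as the contraction factor for derivatives.

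\emph{Step 1: $\mathcal{L}$ is an $L^1$-contraction that preserves densities.} For any $h\in L^1(0,1)$, the substitutions $r=cu$ and $r=c+(1-c)u$ give
\[
\int_0^1 |\mathcal{L}h(u)|\,\mathrm{d}u \le \int_0^1\Bigl(\int_0^c|h(r)|\,\mathrm{d}r+\int_c^1|h(r)|\,\mathrm{d}r\Bigr)\mathrm{d}F(c)=\|h\|_{L^1}.
\]
The same computation without absolute values shows that $\int \mathcal{L}h=\int h$. Since $\mathcal{L}$ is linear and positive, it maps probability densities to probability densities. Moreover $\mathcal{L}1=1$, which is the density form of \Cref{thm:stationarity}.

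\emph{Step 2: smooth densities become flat at rate $\mu_\ell^n$.} Let $h$ be a $\mathcal{C}^1$ probability density on $[0,1]$. Differentiating under the integral sign is justified because $h'$ is bounded and $F$ is a probability measure, and it gives
\[
(\mathcal{L}h)'(u)=\int_0^1\bigl(c^2h'(cu)+(1-c)^2h'(c+(1-c)u)\bigr)\,\mathrm{d}F(c).
\]
Hence $\|(\mathcal{L}h)'\|_\infty\le \int_0^1\bigl(c^2+(1-c)^2\bigr)\mathrm{d}F(c)\,\|h'\|_\infty$. The integral equals $1-2\,\mathbb{E}[c(1-c)]=\mu_\ell$ by the proof of \Cref{cor - mean_variance_special}. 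Since $\mathcal{L}h$ is again $\mathcal{C}^1$, iterating gives $\|(\mathcal{L}^nh)'\|_\infty\le\mu_\ell^n\|h'\|_\infty$. The hypothesis $\mathbb{P}[0<c<1]>0$ is used exactly here: $c(1-c)>0$ on a set of positive $F$-measure, so $\mathbb{E}[c(1-c)]>0$ and $\mu_\ell<1$. Now, $\mathcal{L}^nh$ is a density with integral $1$, so by the mean value theorem it equals $1$ at some point of $[0,1]$. Therefore $|\mathcal{L}^nh(u)-1|\le \|(\mathcal{L}^nh)'\|_\infty$ for all $u$, and consequently
\[
\|\mathcal{L}^nh-1\|_{L^1}\le \mu_\ell^n\|h'\|_\infty .
\]

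\emph{Step 3: approximation.} Given $\varepsilon>0$, I would choose a $\mathcal{C}^1$ probability density $h$ with $\|g_0-h\|_{L^1}\le\varepsilon$. One way is to mollify $g_0$, extended by zero or by reflection, to get a nonnegative smooth function close in $L^1$, and then renormalize; renormalizing changes the $L^1$ distance only by a small amount. Then Steps 1 and 2 give
\[
\|g_n-1\|_{L^1}\le\|\mathcal{L}^n(g_0-h)\|_{L^1}+\|\mathcal{L}^nh-1\|_{L^1}\le \varepsilon+\|h'\|_\infty\,\mu_\ell^n .
\]
This is the stated bound $\varepsilon+O(\mu_\ell^n)$, where the implied constant depends on $\varepsilon$ through $h$. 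Letting $n\to\infty$ and then $\varepsilon\to0$ yields $\|g_n-1\|_{L^1}\to0$.

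The main obstacle I expect is Step 2, and specifically verifying carefully that $\mathcal{L}$ preserves $\mathcal{C}^1$-regularity with the exact factor $\int(c^2+(1-c)^2)\,\mathrm{d}F$ when $F$ is an arbitrary distribution. This requires care when $F$ has atoms at $0$ or $1$. At $c=0$ the first term has weight $c^2=0$, and the second term reduces to $h'(u)$ with weight $1$; at $c=1$ the roles of the two terms are exchanged. So atoms at the endpoints contribute a factor of $1$ rather than a contraction, and this is consistent with the formula for $\mu_\ell$.
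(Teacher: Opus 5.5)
Your proof is correct. It shares the skeleton of the paper's proof but handles the decisive step differently. The common parts are your Step~1 and your final approximation argument; the paper approximates by mean-zero polynomials rather than by mollified densities, which makes no difference.

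The difference is in how decay on the approximant is obtained. The paper shows that $\mathcal{L}$ maps each $\mathcal{P}_m$ into itself and is triangular in the monomial basis. Its diagonal entries $\lambda_m=\int_0^1\bigl(c^{m+1}+(1-c)^{m+1}\bigr)\,\mathrm{d}F(c)$ are strictly decreasing, so $\mathcal{L}|_{\mathcal{P}_m}$ is diagonalizable with eigenpolynomials $\varphi_m$. A mean-zero polynomial has no $\varphi_0$-component, so $\mathcal{L}^np=\sum_{m\ge1}a_m\lambda_m^n\varphi_m=O(\lambda_1^n)$, with $\lambda_1=\mu_\ell$. You replace this spectral decomposition by a single seminorm contraction, $\|(\mathcal{L}h)'\|_\infty\le\mu_\ell\|h'\|_\infty$, plus the observation that a continuous function with integral $1$ equals $1$ somewhere.

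Your route is shorter, needs no diagonalization, and yields an explicit constant. For every $\mathcal{C}^1$ initial density and arbitrary $F$ it gives $\|g_n-1\|_{L^1}\le\|g_n-1\|_\infty\le\mu_\ell^n\|g_0'\|_\infty$. That is a genuine $\mu_\ell$-exponential rate with no $\varepsilon$, which Remark~\ref{rem - upperbound} only asserts under other hypotheses. By contrast, the paper's implicit constant $\sum_m|a_m|\|\varphi_m\|_{L^1}$ is expressed in a non-orthogonal eigenbasis and is not explicit.

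What the spectral route buys is structural information your estimate cannot see: the whole sequence $\lambda_m$ and the eigenpolynomials. For instance, $\varphi_1(t)=1-2t$ for every $F$, so $g_0=1+\delta\varphi_1$ (with $|\delta|\le1$) decays exactly like $\mu_\ell^n$, and the rate cannot be improved in general. The same structure underlies the heuristic, tested in Subsection~\ref{sec - conv_unif}, that data with $a_1=0$ converge at a rate closer to $\lambda_2$.

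The endpoint-atom issue you flag is not a real obstacle. Dominated convergence applies for any $F$, and an atom at $c\in\{0,1\}$ simply contributes the weight $c^2+(1-c)^2=1$.
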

\begin{proof}
Define $\psi_n:=g_n-1$. Then $\int_0^1 \psi_n(u)\,\mathrm{d}u=0$ and
$\mathcal{L}^n g_0-1 = \mathcal{L}^n \psi_0$. For $k\ge 1$, define\vspace{-1.41pt}
\[
\lambda_k
:=
\int_0^1 \!\bigl(c^{\ell+1}+(1-c)^{\ell+1}\bigr)\,\mathrm{d}F(c).\vspace{-1.41pt}
\]
Then, since $\mathbb{P}[0<c<1]>0$ and since $c,1-c\in[0,1]$, we have\vspace{-1.41pt}
\[
1=\lambda_0>\lambda_1>\lambda_2>\cdots>0.\vspace{-1.41pt}
\]
Now let $\mathcal P_m$ be the space of polynomials of degree at most $m$. The operator $\mathcal{L}$ preserves $\mathcal P_m$. More precisely, for $m\ge 0$,\vspace{-1.41pt}
\[
\mathcal{L}(u^m)
=
\int_0^1\bigl(c(cu)^m+(1-c)(c+(1-c)u)^m\bigr)\,\mathrm{d}F(c).\vspace{-1.41pt}
\]
Expanding the second term gives\vspace{-1.41pt}
\[
\mathcal{L}(u^m)
=
\lambda_{m}u^m+\text{a polynomial of degree at most }m-1.\vspace{-1.41pt}
\]
Therefore, in the canonical basis $\{1,x,x^2,\dots,x^m\}$, the restriction $\mathcal{L}|_{\mathcal P_m}$ is triangular, with diagonal entries\vspace{-1.41pt}
\[
\lambda_0,\lambda_1,\ldots,\lambda_{m}.\vspace{-1.41pt}
\]
In particular, $\lambda_1 = \mu_\ell$, defined in \eqref{eq - def mu ell}. Since these numbers are distinct, $\mathcal{L}|_{\mathcal P_m}$ is diagonalizable. Hence there exists, for each $m\ge 0$, a polynomial $\varphi_m$ of degree $m$ such that\vspace{-1.41pt}
\[
\mathcal{L}\varphi_m = \lambda_{m}\varphi_m.\vspace{-1.41pt}
\]
Let $p$ be a polynomial of degree $d$ satisfying $\int_0^1 p(u)\,\mathrm{d}u=0$. Then $p$ has a unique decomposition\vspace{-1.41pt}
\[
p=\sum_{m=1}^{d} a_m\varphi_m.\vspace{-1.41pt}
\]
Note that the term $m=0$ is absent because $p$ has zero mean. Indeed, we have\vspace{-1.41pt}
\begin{align*}
    \lambda_{m}\int_0^1 \!\varphi_m(u)\,\mathrm{d}u &= \int_0^1 \!\mathcal{L}\varphi_m(u)\,\mathrm{d}u = \int_0^1 \!\int_0^1\!
        \bigl(
            c \varphi_m(cu)
            +(1-c)\varphi_m(c+(1-c)u)
        \bigr)
        \,\mathrm{d}F(c)\,\mathrm{d}u \\
        &= \int_0^1 \biggl(\int_0^c\!
             \varphi_m(v) \,\mathrm{d}v
            +\int_c^{1} \!\varphi_m(v)\,\mathrm{d}v
        \biggr)\,\mathrm{d}F(c) = \int_0^1\! \int_0^{1}\! \varphi_m(v)\,\mathrm{d}v\,\mathrm{d}F(c) \\
        &=  \biggl(\int_0^1 \!\mathrm{d}F(c)\biggr) \biggl( \int_0^{1}\!\varphi_m(v)\,\mathrm{d}v \biggr) = \int_0^{1}\! \varphi_m(v)\,\mathrm{d}v .\vspace{-1.41pt}
\end{align*}
Therefore, for $m\ge 1$, we have $\lambda_{m}\ne 1$ which implies that $\smash{\int_0^1 \varphi_m(u)\,\mathrm{d}u=0}$. Thus\vspace{-1pt}
\[
\mathcal{L}^n p
=
\sum_{m=1}^{d} a_m\lambda_{m}^n\varphi_m.
\]
Since $\lambda_{m}<1$ for every $m\ge 1$, it follows that
\[
\|\mathcal{L}^n p\|_{L^1} \leq \sum_{m=1}^{d} |a_m|\lambda_{m}^n \|\varphi_m\|_{L_1} \leq \lambda_1^n \sum_{m=1}^{d} |a_m| \|\varphi_m\|_{L_1} = O(\lambda_1^n) \longrightarrow 0.
\]

Now let $\varepsilon>0$. Since polynomials are dense in $L^1([0,1])$, choose a polynomial $q$ such that
\[
\|\psi_0-q\|_{L^1}< \frac{\varepsilon}{2}.
\]
Set $p:=q-\int_0^1 q(u)\,\mathrm{d}u$. Since $\int_0^1 \psi_0(u)\,\mathrm{d}u=0$, we have
\[
\left|\int_0^1 q(u)\,\mathrm{d}u\right|
=
\left|\int_0^1(q(u)-\psi_0(u))\,\mathrm{d}u\right|
\le
\|q-\psi_0\|_{L^1}
<
\frac{\varepsilon}{2}.
\]
Therefore
\[
\|\psi_0-p\|_{L^1}
\le
\|\psi_0-q\|_{L^1}
+
\left|\int_0^1 q(u)\,\mathrm{d}u\right|
<
\varepsilon.
\]
Moreover, the operator $\mathcal{L}$ is an $L^1$-contraction. Indeed, for every $g\in L^1([0,1])$,
\begin{align*}
\|\mathcal{L}g\|_{L^1}
&\leq \int_0^1\int_0^1
\Big(c|g(ct)|+(1-c)|g(c+(1-c)t)|\Big)\,\mathrm{d}t\,\mathrm{d}F(c) \\
&= \int_0^1\left(\int_0^c |g(u)|\,\mathrm{d}u+\int_c^1 |g(u)|\,\mathrm{d}u\right)\mathrm{d}F(c)
= \|g\|_{L^1} .
\end{align*}
Hence $\|\mathcal{L}^n g\|_{L^1}\leq \|g\|_{L^1}$ for all $n\geq 0$. Therefore, we finally obtain
\[
\|\mathcal{L}^n \psi_0\|_{L^1}
\le
\|\mathcal{L}^n(\psi_0-p)\|_{L^1}
+
\|\mathcal{L}^n p\|_{L^1}
\le
\|\psi_0-p\|_{L^1}
+
\|\mathcal{L}^n p\|_{L^1} \leq \varepsilon + O(\lambda_1^n). \qedhere
\]
\end{proof}

\begin{rem}\label{rem - upperbound}
    The bound $\varepsilon+O(\mu_\ell^n)$ should not be interpreted as a genuine $\mu_\ell$-exponential convergence rate, since the implicit constant depend on $\varepsilon>0$ and may blow up as $\varepsilon\to0$. Nevertheless, we may show that such a rate holds under natural assumptions; for example, if $g_0=\sum_{n\geq0}a_n\varphi_n$ with $\sum_{n\geq0}|a_n|\|\varphi_n\|_{L^1}<\infty$, or if $g_0\in L^2([0,1])$ and the cuts are chosen uniformly.

    Moreover, $\lambda_1=\mu_\ell$ only gives an upper bound on the possible rate. Indeed, consider the root with PDF $g_0 := 1+\|\varphi_m\|^{-1}\varphi_m$, where $m\geq 2$ and $\|\cdot\|$ is the supremum norm. Then we have
    \[
    g_n-1 = \mathcal{L}^ng_0-1 =  \mathcal{L}^n (\varphi_m+1)-1 =  \mathcal{L}^n \varphi_m = \lambda_m^n \varphi_m.
    \]
    Consequently, $\|g_n-1\|_{L^1} = \lambda_m^n \|\varphi_m\|_{L^1} = O(\lambda_m^n)$, which has a faster convergence rate than $O(\mu_\ell^n)$. More generally, if $g_0$ is well approximated by a polynomial $p=1+\sum_{m=1}^{d}a_m\varphi_m$ with $a_1=\cdots=a_k=0$, then one may heuristically expect convergence at a rate closer to $\lambda_{k+1}$. In particular, when $g_0$ is symmetric and the cut distribution is symmetric, the coefficient $a_1$ should vanish, since $\varphi_1(t)=1-2t$ is antisymmetric. One may therefore expect a faster rate, closer to $\lambda_2$. This heuristic is explored numerically in further details in \Cref{sec - conv_unif}.
\end{rem}

Let us now return to the main problem and derive an analog of \Cref{thm - pdf} in the general case. If the random variables $\ell_n$ admit a PDF $h_n$, one would naturally seek to prove that
\[
\int_0^1 |h_n(t)-h(t)|\,\mathrm{d} t \longrightarrow 0,
\]
where $h(t)=t\bigl(f(t)+f(1-t)\bigr)$ is the density identified in \Cref{thm - pdf}. However, even when the normalized roots $r_n$ are absolutely continuous, the random variables $\ell^n$ need not admit densities.

\begin{ex}
    Let $r_0 \sim\mathrm{Beta}(2,2)$ and $c_0\sim \frac{1}{2}\delta_{1/3} + \frac{1}{2}\delta_{2/3}$. Then,
    \begin{equation*}
        \ell_1 \sim \frac{7}{27} \delta_{1/3} + \frac{20}{27} \delta_{2/3}.
    \end{equation*}
    Note that simultaneously, $r_1$, which is absolutely continuous, always has a polynomial density.
\end{ex}

In that setting, $L^1$ convergence of densities is no longer meaningful, so a different notion is needed. Although one could use total variation distance between the measures induced by $\ell_n$, which agrees with the above notion when densities exist, we avoid unnecessary machinery and instead use the simpler convergence of cumulative distribution functions in the supremum norm
\begin{equation*}
    \| f \| := \sup_{x\in[0,1]} |f(x)|,
\end{equation*}
which is sufficient for our purposes.

\begin{thm}\label{thm - pdf_general}
Let $F$ be the cumulative distribution function of the cuts $c_n$, $G_n$ the cumulative distribution function of the normalized roots $r_n$, $H(t)$ the cumulative distribution function defined in \Cref{thm - pdf}, and let $\ell_n$ be defined as in \eqref{def - l}. Then the cumulative distribution function of $\ell_n$ is
\[
H_n(t) = \int_0^t G_n(x) \, \mathrm{d} F(x) + \int_{1-t}^1 \bigl(1-G_n(x)\bigr)\,\mathrm{d} F(x).
\]
Moreover, if $r_0$ is absolutely continuous and $\mathbb{P}[0<c<1]>0$, then, for all $\varepsilon>0$,
\[
\|H_n(t)-H(t)\| \leq \varepsilon+O(\mu_\ell^n) \longrightarrow 0.
\]
\end{thm}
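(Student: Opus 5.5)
The plan has two parts: first repeat the computation of \Cref{thm - pdf} with a general root distribution, then compare $H_n$ with $H$ using \Cref{thm - root_conv}.

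\emph{Formula for $H_n$.} Since $c_n$ is independent of $r_n$, the same decomposition as in \Cref{thm - pdf} applies:
\[
\{\ell_n\le t\}=\{r_n\le c_n\le t\}\cup\{1-t\le c_n<r_n\},
\]
and the two events are disjoint. We condition on $c_n=x$. When $r_n$ is absolutely continuous, $\mathbb{P}[r_n\le x]=G_n(x)$ and $\mathbb{P}[r_n> x]=1-G_n(x)$. For a general root distribution we still have $\mathbb{P}[r_n\le x]=G_n(x)$ by definition, but the second event requires $\mathbb{P}[r_n>x]=1-G_n(x)$. This matches the strict inequality $c<r$, so no atom issue arises. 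Integrating against $\mathrm{d}F(x)$ over $[0,t]$ and $[1-t,1]$ respectively gives the stated formula for $H_n$.

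\emph{Comparison with $H$.} Under the uniform root, $G(x)=x$, and the formula reduces exactly to $H$. Hence
\[
H_n(t)-H(t)=\int_0^t \bigl(G_n(x)-x\bigr)\,\mathrm{d}F(x)-\int_{1-t}^1\bigl(G_n(x)-x\bigr)\,\mathrm{d}F(x).
\]
Both integrals are against a probability measure, so
\[
|H_n(t)-H(t)|\le 2\sup_x|G_n(x)-x|
\]
uniformly in $t$. Since $r_n$ has density $g_n$, for every $x$ we have
\[
|G_n(x)-x|=\left|\int_0^x (g_n(u)-1)\,\mathrm{d}u\right|\le \|g_n-1\|_{L^1}.
\]
Therefore
\[
\|H_n-H\|\le 2\|g_n-1\|_{L^1}.
\]

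\emph{Applying \Cref{thm - root_conv}.} For any $\varepsilon>0$, \Cref{thm - root_conv} gives $\|g_n-1\|_{L^1}\le \varepsilon/2+O(\mu_\ell^n)$. This yields $\|H_n-H\|\le\varepsilon+O(\mu_\ell^n)$. Since $\varepsilon$ is arbitrary and $\mu_\ell=\lambda_1<1$, it follows that $\|H_n-H\|\to0$.

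The only delicate step is the reduction from $L^1$ convergence of densities to uniform convergence of the CDFs $G_n$, which is what allows the Stieltjes integrals against $\mathrm{d}F$ to be controlled even when $F$ has atoms. The bound $|G_n(x)-x|\le\|g_n-1\|_{L^1}$ for every $x$ handles exactly this, because it places no restriction on $F$.
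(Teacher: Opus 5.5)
Your proposal is correct and follows essentially the same route as the paper: you condition on $c_n=x$ and use independence to obtain $H_n$, then bound $|H_n(t)-H(t)|$ by $2\sup_x|G_n(x)-x|\le 2\|g_n-1\|_{L^1}$, and conclude with \Cref{thm - root_conv}. Invoking that theorem with $\varepsilon/2$ to absorb the factor $2$ is a small tidying of a step the paper leaves implicit.
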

\begin{proof}
Proceeding as in \Cref{thm - pdf}, we find that the CDF of $\ell_n$ is
\begin{align*}
    H_n(t) &= \mathbb{P}[r_n\leq c_n\leq t]+\mathbb{P}[1-t\leq c_n < r_n] \\
    &= \int_0^t \mathbb{P}[r_n\leq x\,|\, c_n=x]\,\mathrm{d} F(x) + \int_{1-t}^1 \mathbb{P}[x\leq r_n\,|\, c_n=x] \,\mathrm{d} F(x) \\
    &= \int_0^t G_n(x) \, \mathrm{d} F(x) + \int_{1-t}^1 \bigl(1-G_n(x)\bigr)\,\mathrm{d} F(x).
\end{align*}
which establishes our first claim. Now, let $H(t)$ be the CDF defined in \Cref{thm - pdf}, namely\vspace{-.5pt}
\[
H(t)
=
\int_0^t x \,\mathrm{d} F(x)
+
\int_{1-t}^1 (1-x)\,\mathrm{d} F(x).
\]
\vspace{-6pt} Then we have\vspace{-1pt} 
\begin{align*}
    |H_n(t)-H(t)| &\leq \int_0^t \bigl| G_n(x)-x \bigr| \, \mathrm{d} F(x) + \int_{1-t}^1 \bigl|G_n(x)-x\bigr|\,\mathrm{d} F(x) \\
    &\leq \|G_n(t)-t\| \bigg( \int_0^t  \mathrm{d} F(x) + \int_{1-t}^1 \mathrm{d} F(x) \biggr) \\
    &\leq 2\,\|G_n(t)-t\|.
\end{align*}
Therefore, it follows from \Cref{thm - root_conv} that
\begin{align*}
    |H_n(t)-H(t)| &\leq 2\,\|G_n(t)-t\| = 2 \left| \int_0^t \bigl(g_n(t)-1\bigr) \,\mathrm{d}t \right| \\
    &\leq 2\int_0^t \bigl|g_n(t)-1\bigr| \,\mathrm{d}t \,\leq\, 2\,\|g_n-1\|_{L^1} \\
    &\leq \varepsilon+O(\mu_\ell^n)
\end{align*}
for all $\varepsilon>0$. Taking the supremum over $t\in[0,1]$ finally yields the desired conclusion.
\end{proof}

As a last corollary to this section, we finally conclude that the length $\ell_n$ will eventually shrink at each step by an average factor of approximately $1-2(\mu -\mu^2 - \sigma^2)$, as long as the initial root $r_0$ is absolutely continuous and the cuts $c$ satisfy $\mathbb{P}[0<c<1]>0$.

\begin{cor}\label{cor - mean_variance_gen}
    Let $r_n$ be the random normalized roots, as defined as in \eqref{eq - iteration_root}, let $c_n$ be the random cuts taken from a distribution with mean $\mu$ and variance $\sigma^2$, and let $\ell_n$ be defined as in \eqref{def - l}. If $r_0$ is absolutely continuous and $\mathbb{P}[0<c<1]>0$, then \vspace{-1pt}
    \begin{gather*}
        \mu_{\ell_n} \longrightarrow 1-2(\mu -\mu^2 - \sigma^2),\\
        \sigma_{\ell_n}^2 \longrightarrow 
        (\mu -\mu^2 - \sigma^2) \bigl(1-4 (\mu -\mu^2 - \sigma^2)\bigr).
    \end{gather*}
\end{cor}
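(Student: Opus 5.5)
We will deduce the convergence of the first two moments of $\ell_n$ from the convergence of the cumulative distribution functions established in \Cref{thm - pdf_general}, together with the explicit moments of the limiting law $H$ computed in \Cref{cor - mean_variance_special}. Since every $\ell_n$ takes values in $[0,1]$, its moments can be written as integrals of its CDF. This turns sup-norm convergence of CDFs directly into convergence of moments, with no tail issues.

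\textbf{Step 1.} For a random variable $X$ on $[0,1]$ with CDF $K$ and any $k\ge 1$, integration by parts (or Fubini applied to $X^k=\int_0^1 k t^{k-1}\mathbf{1}_{\{t<X\}}\,\mathrm{d}t$) gives
\[
\mathbb{E}[X^k]=\int_0^1 k t^{k-1}\bigl(1-K(t)\bigr)\,\mathrm{d}t .
\]
This identity holds regardless of whether $X$ has atoms, which matters here: by the example preceding \Cref{thm - pdf_general}, $\ell_n$ may be purely discrete.

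\textbf{Step 2.} Apply Step 1 to $\ell_n$ (CDF $H_n$) and to a variable $\ell$ with CDF $H$. This yields
\[
\bigl|\mathbb{E}[\ell_n^k]-\mathbb{E}[\ell^k]\bigr|\le \int_0^1 k t^{k-1}\,|H_n(t)-H(t)|\,\mathrm{d}t\le \|H_n-H\| .
\]
By \Cref{thm - pdf_general}, the right-hand side is at most $\varepsilon+O(\mu_\ell^n)$ for every $\varepsilon>0$. Hence $\limsup_n|\mathbb{E}[\ell_n^k]-\mathbb{E}[\ell^k]|\le\varepsilon$ for every $\varepsilon>0$, so the difference tends to $0$ for $k=1,2$. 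Here we use that $\mu_\ell<1$, which holds because $\mathbb{P}[0<c<1]>0$ forces $\mathbb{E}[c(1-c)]>0$. Even without this, $\|H_n-H\|\to0$ suffices, since \Cref{thm - pdf_general} asserts that convergence directly.

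\textbf{Step 3.} By \Cref{cor - mean_variance_special}, $\mathbb{E}[\ell]=1-2(\mu-\mu^2-\sigma^2)$ and $\mathbb{E}[\ell^2]=\sigma_\ell^2+\mu_\ell^2$ with $\sigma_\ell^2$ as stated. Then $\mu_{\ell_n}\to\mu_\ell$, and
\[
\sigma_{\ell_n}^2=\mathbb{E}[\ell_n^2]-\mu_{\ell_n}^2\;\longrightarrow\;\mathbb{E}[\ell^2]-\mu_\ell^2=\sigma_\ell^2
\]
by continuity of $x\mapsto x^2$.

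There is no serious obstacle. The only point requiring care is Step 1 for distributions with atoms: one must check that right-continuity of the CDF causes no discrepancy, which it does not, because the set of atoms has Lebesgue measure zero in the $\mathrm{d}t$ integral.
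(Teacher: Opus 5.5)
Your proof is correct and is essentially the paper's argument: the paper also converts $\|H_n-H\|\to0$ from \Cref{thm - pdf_general} into convergence of $\mathbb{E}[\ell_n]$ and $\mathbb{E}[\ell_n^2]$ via Lebesgue--Stieltjes integration by parts, and then reads off the limits from \Cref{cor - mean_variance_special}. Your tail formula $\mathbb{E}[X^k]=\int_0^1 kt^{k-1}(1-K(t))\,\mathrm{d}t$ is the same identity, derived by Fubini, which avoids boundary terms, and your bound $\int_0^1 kt^{k-1}|H_n-H|\,\mathrm{d}t\le\|H_n-H\|$ is slightly sharper than the paper's factor $2$, though this does not matter.
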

\begin{proof}
    By \Cref{thm - pdf_general}, it follows that $\ell_n$ has the cumulative distribution function $H_n(t) = \int_0^t G_n(x) \, \mathrm{d} F(x) + \int_{1-t}^1 \bigl(1-G_n(x)\bigr)\,\mathrm{d} F(x).$ Moreover, if $H(t)=\int_0^t x \, \mathrm{d} F(x) + \int_{1-t}^1 \bigl(1-x\bigr)\,\mathrm{d} F(x)$, then $\|H_n-H\| \to 0$. It also follows from \Cref{cor - mean_variance_special} that
    \[
    \int_0^1 \!t \,\mathrm{d} H(t) = 1-2(\mu -\mu^2 - \sigma^2) = \mu_\ell \quad\text{and}\quad \int_0^1 \!t^2 \,\mathrm{d} H(t) = 1-3(\mu -\mu^2 - \sigma^2) = \mu_\ell^2+\sigma_\ell^2.
    \]
    Hence, it follows from integration by parts (the integration by parts formula for the Lebesgue--Stieltjes integral holds in this context  \cite[Theorem 14.1]{Saks_1937}) that
    \begin{align*}
        |\mu_{\ell_n}-\mu| &= \biggl|\int_0^1  \!\bigl(t\,\mathrm{d} H_n(t)-t\,\mathrm{d} H(t)\bigr) \biggr| =\biggl| H_n(1)-H_n(0)-H(1)+H(0)+\!\int_0^1  \!\bigl(H(t)-H_n(t)\bigr)\,\mathrm{d} t \biggr| \\
        &= \biggl|\int_0^1  \!\bigl(H(t)-H_n(t)\bigr)\,\mathrm{d} t \biggr| 
        \,\leq\,  \|H_n(t)-H(t)\| \,\longrightarrow\, 0 \\[-20.5pt]
    \end{align*}
    and \vspace{-6.5pt}
    \begin{align*}
        |\mu_{\ell_n}^2+\sigma_{\ell_n}^2-\mu_\ell^2-\sigma_\ell^2| &= \Biggl|\int_0^1 \bigl(t^2\,\mathrm{d} H_n(t)-t^2\,\mathrm{d} H(t)\bigr) \Biggr| \\
        &= \biggl| H_n(1)-H_n(0)-H(1)+H(0)+2\int_0^1  t\bigl(H(t)-H_n(t)\bigr)\,\mathrm{d} t \biggr| \\
        &= 2\,\biggl|\int_0^1  t\bigl(H_n(t)-H(t)\bigr)\,\mathrm{d} t \biggr| \,\leq\,  2\|H_n(t)-H(t)\| \,\longrightarrow\, 0.
    \end{align*}
    The conclusion then follows immediately from the triangle inequality.
\end{proof}

\section{Random multisection algorithm}\label{sec:4}

The classical bisection method can be generalized by taking $K$ equally spaced cuts in $[a,b]$ and keeping the subinterval where a sign change occurs, yielding a $(K+1)$-section algorithm. In that case, the interval shrinks by a factor $\frac{1}{K+1}$ at each step. We study the case where multiple uniformly random cuts are chosen over the interval. Let a vector of $K$ random cuts $\mathbf{c}_0 = \bigl(c_0^{1},\dots,c_0^{K}\bigr)$ be i.i.d.~with distribution $\mathcal{U}(0,1)$, and $r_0 \sim \mathcal{U}(0,1)$. We denote the order statistics by
\begin{equation*}
    \mathbf{c}_0^{\uparrow} = \bigl(c_0^{(1)},\dots,c_0^{(K)}\bigr),\qquad \text{ where } c_0^{(1)}\leq\dots\leq c_0^{(K)}
\end{equation*}
Moreover, let $c_0^{(0)}:=0$ and $c_0^{(K+1)}:=1$.
The rescaled root is then defined piecewise by the random variable \vspace{-1pt}
\begin{equation*}
    r_1 = T_{K+1}\big(\mathbf{c}_0,r_0\big)= 
        \frac{r_0-c_0^{(i)}}{c_0^{(i+1)}-c_0^{(i)}} \qquad\text{if }~ c_0^{(i)}\leq r_0<c_0^{(i+1)}\quad (i = 0,\dots,K),
\end{equation*}
which corresponds to a skewed $(K+1)$-adic map (\Cref{fig:kskeweddyadic}).

\begin{figure}[ht]
\begin{minipage}{.49\linewidth}
    \vspace{16pt}
\begin{figure}[H]
    \centering
    \begin{tikzpicture}[scale=.897]
    \definecolor{bluefill}{RGB}{25,25,112} 
    \definecolor{redfill}{rgb}{0.8, 0.25, 0.33} 
    \definecolor{greenfill}{RGB}{0,153,0}
    \definecolor{purplefill}{RGB}{255,148,0}
    
    \begin{axis}[
        axis lines = middle,
        x label style={at={(axis description cs:1.01,.1)},anchor=west},
        xmin = -0.09, xmax = 1.1,
        ymin = -0.09, ymax = 1.1,
        samples = 400,
        domain = 0:1,
        legend style={
        at={(0.1,0.99)},
        anchor=north west,
        draw=none
    }
    ]
    \addplot[bluefill, very thick] {x*((2*x^2-3*x+1)/(2^(0-1))+1)};
    \addlegendentry{$G_0$};
    
    \addplot[redfill, very thick,dashed] {x*((2*x^2-3*x+1)/(2^(1-1))+1)};
    \addlegendentry{$G_1$};
    
    \addplot[greenfill, very thick,dotted] {x*((2*x^2-3*x+1)/(2^(2-1))+1)};
    \addlegendentry{$G_2$};
    
    \addplot[purplefill,thick] {x*((2*x^2-3*x+1)/(2^(3-1))+1)};
    \addlegendentry{$G_3$};
    \end{axis}
    \end{tikzpicture}
    \vspace{-2pt}
    \caption{The CDF $G_0(t)=t(4t^2-6t+3)$, $G_1$, $G_2$ and $G_3$ with uniformly chosen cuts.}
    \label{fig:convergence}
\end{figure}
\end{minipage}%
\begin{minipage}{.02\linewidth}
\hfill
\end{minipage}%
\begin{minipage}{.49\linewidth}
\begin{figure}[H]
    \centering
    \hspace{-12pt}
    \begin{tikzpicture}[scale=.78]

    \definecolor{bluefill}{RGB}{25,25,112}
  \begin{axis}[
    width=8.5cm, height=7.58cm,
    xmin=0, xmax=1,
    ymin=0, ymax=1,
    xlabel={$r$},
    ylabel={$T_{4}(\mathbf{c},r)$},
    ylabel style={yshift=-8pt},
    xlabel style={yshift=5pt},
    axis lines=left,
    xtick={0, 0.3, 0.7, 0.8, 1},
    xticklabels = {$0$, $c^{(1)}$, $c^{(2)}$,$c^{(3)}$, $1$},
    ytick={0, 1},
    grid=major,
    grid style={dashed, gray!40},
  ]

  \addplot[bluefill, thick, domain=0:0.3, samples=2] {x/0.3};

  \addplot[bluefill, thick, domain=0.3:0.7, samples=2] {(x-0.3)/(0.7-0.3)};

  \addplot[bluefill, thick, domain=0.7:0.8, samples=2] {(x-0.7)/(0.8-0.7)};

  \addplot[bluefill, thick, domain=0.8:1, samples=2] {(x-0.8)/(1-0.8)};
  
  \end{axis}
\end{tikzpicture}
\vspace{-6pt}
    \caption{Skewed $(K+1)$-adic map ($K=3$).}
    \label{fig:kskeweddyadic}
\end{figure}
\end{minipage}
\end{figure}

As in \Cref{thm:stationarity}, the uniform distribution is also stationary for a $K$-cuts algorithm.

\begin{prop}
    If $r_0\sim\mathcal{U}(0,1)$, then $r_n\sim \mathcal{U}(0,1)$ for all $n\geq1$. That is, $\mathcal{U}(0,1)$ is a stationary distribution for the $K$-cuts algorithm. \label{thm:K-stationarity}
\end{prop}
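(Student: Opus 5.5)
The plan is to mimic the proof of \Cref{thm:stationarity}, conditioning on the cut vector $\mathbf{c}_0$ rather than on a single cut. By independence of $r_0$ and $\mathbf{c}_0$, it suffices to show that for every \emph{fixed} realization $0=c^{(0)}\le c^{(1)}\le\dots\le c^{(K)}\le c^{(K+1)}=1$ of the order statistics, $\mathbb{P}[T_{K+1}(\mathbf{c},r_0)\le t]=t$ for all $t\in[0,1]$. Integrating this identity against the joint law of $\mathbf{c}_0^{\uparrow}$, which is a probability measure, then gives $\mathbb{P}[r_1\le t]=t$. In particular the argument never uses that the cuts are uniform or i.i.d.; any cut distribution independent of $r_0$ would do.

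For a fixed $\mathbf{c}$, the lower level set $\{r: T_{K+1}(\mathbf{c},r)\le t\}$ decomposes as a disjoint union over $i=0,\dots,K$ of the pieces where $c^{(i)}\le r<c^{(i+1)}$. On the $i$-th piece, $T_{K+1}(\mathbf{c},r)\le t$ is equivalent to $r\le c^{(i)}+t\,(c^{(i+1)}-c^{(i)})$. So each piece is the interval $[c^{(i)},\,c^{(i)}+t(c^{(i+1)}-c^{(i)})]$, intersected with $[c^{(i)},c^{(i+1)})$. Since $r_0$ is uniform, its probability is $t\,(c^{(i+1)}-c^{(i)})$. Summing over $i$, the sum telescopes to $t\,(c^{(K+1)}-c^{(0)})=t$. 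This is the direct analogue of the two triangles in \eqref{eq - region}, now with $K+1$ wedges.

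Having shown $r_1\sim\mathcal{U}(0,1)$, the induction goes as before. At step $n$, the fresh cuts $\mathbf{c}_n$ are independent of $r_n$ and have the same law as $\mathbf{c}_0$. Applying the same computation to $(\mathbf{c}_n,r_n)$ gives $r_{n+1}\sim\mathcal{U}(0,1)$ for all $n$.

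The only delicate point, which is minor, is the set of ties: coincident order statistics, $r_0$ landing exactly on a cut, or the endpoint $r_0=1$ not covered by the half-open pieces. These have probability zero under the uniform law of $r_0$ (and ties among the cuts also have probability zero), or else they produce degenerate pieces of zero length that contribute zero to the sum. We will note this explicitly so that the convention for $T_{K+1}$ on such sets does not matter.
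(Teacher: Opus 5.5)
Your proposal is correct and follows the same route as the paper: condition on the cuts and use the telescoping sum $\sum_{j=0}^{K} t\bigl(c^{(j+1)}-c^{(j)}\bigr)=t$ for the inner $r_0$-integral, then induct. The only difference is that you integrate directly against the law of the order statistics instead of using the paper's $K!\int_{\Delta_K}$ symmetry reduction, which, as you note, shows the result holds for any cut law independent of $r_0$.
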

\begin{proof}
The CDF of $r_1$ is represented by a $(K+1)$-dimensional integral 
\begin{align*}
    \mathbb{P}[T_{K+1}(\mathbf{c}_0,r_0) \leq t] &= \int_{[0,1]^{K+1}} \mathbf{1}_{L^-_t(T_{K+1})} \,\mathrm{d} r_0 \mathrm{d} c_0^{K}\cdots\mathrm{d} c_0^{1} \\
    &= K! \int_{ \Delta_K \times[0,1]} \mathbf{1}_{L^-_t(T_{K+1})} \mathrm{d} r_0 \mathrm{d} c_0^{(K)}\cdots\mathrm{d} c_0^{(1)}
\end{align*}
since the order statistics form a partition of the $K$-dimensional cube in $K!$ simplices and by symmetry of the uniform distribution, we can integrate over a single ascending simplex $\Delta_K$. We want the measure of the following geometric region:
\begin{align*}
    L_t^{-}(T_{K+1}) = \big\{(\mathbf{c}_0,r_0):  c_0^{(j)}\leq \,\,r_0 \leq t (c_0^{(j+1)}-c_0^{(j)}) + c_0^{(j)}, j = 0,\dots,K \big\}.
\end{align*}
The innermost integral is
\begin{align*}
    \int_0^1 \mathbf{1}_{L^-_t(T_{K+1})} \mathrm{d} r_0 = \sum_{j=0}^K \int_{c_0^{(j)}}^{t\bigl(c_0^{(j+1)}-c_0^{(j)}\bigr) + c_0^{(j)}} \mathrm{d} r_0 = \sum_{j=0}^K t\bigl(c_0^{(j+1)}-c_0^{(j)}\bigr) = t
\end{align*}
because of the telescopic summation. This leaves only $t K! \int_{\Delta_K} \mathrm{d} c_0^{(K)}\dots\mathrm{d} c_0^{(1)}$, where the integral is the probability that $K$ uniformly chosen random numbers in $[0,1]$ appear in nondecreasing order. Since the $K!$ possible orderings are equiprobable and only one is nondecreasing, up to probability-zero ties, the integral equals $1/K!$. Thus,
\begin{align*}
    t K! \int_{\Delta_K} \mathrm{d} c_0^{(K)}\cdots\mathrm{d} c_0^{(1)} = t K!\frac{1}{K!} = t.
\end{align*} 
Thus, $\mathbb{P}[T_{K+1}(\mathbf{c}_0,r_0) \leq t] = t$ and $r_1\sim \mathcal{U}(0,1)$. By induction, the same holds for any $r_n$ with $n>1$. Hence, the uniform distribution of the root is also stationary for a $K$-cuts algorithm.
\end{proof}

Let us now determine the expected length of $\ell_1$.

\begin{prop}\label{cor - K-mean}
    If $r_0 \sim \mathcal{U}(0,1)$ and $\mathbf{c}_0=(c_0^1,\dots,c_0^K)$ are i.i.d. with distribution $\mathcal{U}(0,1)$, then\vspace{-1pt}
    \begin{gather*}
        \mathbb{E}[\ell_1] = \frac{2}{K+2}.
    \end{gather*}
\end{prop}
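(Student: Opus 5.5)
Conditioning on the cuts reduces the problem to a spacing computation. Given $\mathbf{c}_0$, the root $r_0\sim\mathcal U(0,1)$ is independent of the cuts and falls in the $i$-th spacing $[c_0^{(i)},c_0^{(i+1)})$ with probability equal to its length $D_i:=c_0^{(i+1)}-c_0^{(i)}$. In that case the retained interval has length $\ell_1=D_i$. Hence, exactly as in \eqref{eq:El1r0} but summing over $K+1$ pieces,
\[
\mathbb{E}[\ell_1\,|\,\mathbf{c}_0]=\sum_{i=0}^{K} D_i\cdot D_i=\sum_{i=0}^K D_i^2,
\qquad
\mathbb{E}[\ell_1]=\sum_{i=0}^{K}\mathbb{E}[D_i^2].
\]
This is the size-biased spacing phenomenon, and it is the reason the answer exceeds $1/(K+1)$.

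The remaining step, which I expect to be the only one needing care, is to compute $\mathbb{E}[D_i^2]$ for the spacings of $K$ i.i.d.\ uniforms. I would use the classical fact that $(D_0,\dots,D_K)$ is uniformly distributed on the $K$-simplex, i.e.\ Dirichlet$(1,\dots,1)$. This can be justified by the same order-statistics argument as in the proof of \Cref{thm:K-stationarity}: the joint density of $(c_0^{(1)},\dots,c_0^{(K)})$ is $K!$ on $\Delta_K$, and the map to spacings is linear with unit Jacobian. Consequently the spacings are exchangeable, and each $D_i$ has the law of $c_0^{(1)}=\min_j c_0^j$, namely $\mathrm{Beta}(1,K)$ with density $K(1-x)^{K-1}$. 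Then
\[
\mathbb{E}[D_i^2]=\int_0^1 x^2K(1-x)^{K-1}\,\mathrm{d}x=K\,B(3,K)=\frac{2}{(K+1)(K+2)}.
\]

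Summing the $K+1$ identical terms gives $\mathbb{E}[\ell_1]=\frac{2}{K+2}$. As a check, $K=1$ recovers $\frac{2}{3}$ from \Cref{example - unif}. An alternative route to $\mathbb{E}[D_i^2]$, avoiding the Dirichlet fact, is the tail formula $\mathbb{P}[D_0>x]=(1-x)^K$ followed by $\mathbb{E}[D_0^2]=\int_0^1 2x(1-x)^K\,\mathrm{d}x$, combined with exchangeability.
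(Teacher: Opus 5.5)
Your proof is correct, but it conditions in the opposite direction from the paper. The paper conditions on the root. It computes $\mathbb{P}[c_0^{(j)}<r_0\le c_0^{(j+1)}\mid r_0]=\binom{K}{j}r_0^j(1-r_0)^{K-j}$, then uses the max/min of uniforms on $[0,r_0]$ and $[r_0,1]$ to get the conditional mean length $\frac{r_0}{j+1}+\frac{1-r_0}{K-j+1}$ of the straddling gap. It then resums with binomial identities to get $\mathbb{E}[\ell_1\mid r_0]=\frac{2-r_0^{K+1}-(1-r_0)^{K+1}}{K+1}$, and only then integrates over $r_0$.

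You instead condition on the cuts. This turns $\mathbb{E}[\ell_1]$ into the size-biased quantity $\mathbb{E}\bigl[\sum_i D_i^2\bigr]$, and everything reduces to the second moment of a single $\mathrm{Beta}(1,K)$ spacing. Your value $K\,B(3,K)=\frac{2}{(K+1)(K+2)}$ (equivalently $\int_0^1 2x(1-x)^K\,\mathrm{d}x$) is right.

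Your route is shorter and more structural. The identity $\mathbb{E}[\ell_1]=\mathbb{E}\bigl[\sum_i D_i^2\bigr]$ uses only the uniformity of $r_0$, so it holds for any law of the cuts. For $K=1$ it gives $\mathbb{E}[c^2+(1-c)^2]=1-2\,\mathbb{E}[c(1-c)]$ directly, which is the paper's $\mu_\ell$, without passing through the CDF $H$. Cauchy--Schwarz, $\sum_i D_i^2\ge \frac{1}{K+1}$, also shows at once that equally spaced deterministic $(K+1)$-section minimizes the expected factor, mirroring the paper's bisection optimality corollary.

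The paper's route needs the cuts to be uniform, but it yields the root-dependent profile $\mathbb{E}[\ell_1\mid r_0]$. That profile generalizes \Cref{example - unif}, describes what happens for a fixed root (as in the numerical experiments), and can be integrated against a non-uniform root law.
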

\begin{proof}
As in the previous sections, the scaling factor is defined piecewise by
\begin{equation*}
    \ell_1 = 
        c_0^{(j+1)}-c_0^{(j)} \qquad\text{if }~ c_0^{(j)}\leq r_0<c_0^{(j+1)}\quad (j = 0,\dots,K).
\end{equation*}
Similarly to \eqref{eq:El1r0}, we can write\vspace{-2pt}
\begin{align*}
    \mathbb{E}[\ell_1\,|\, r_0] = \sum_{j=0}^K \mathbb{E}\bigl[c_0^{(j+1)}-c_0^{(j)} \,|\, c_0^{(j)}< r_0 \leq c_0^{(j+1)}\bigr] \cdot \mathbb{P}\bigl[c_0^{(j)}< r_0\leq c_0^{(j+1)}\bigr].\\[-19pt]
\end{align*}
First note that we have $c_0^{(j)}\leq r_0 < c_0^{(j+1)}$ if and only if $j$ of the random cuts fall in $[0,r_0]$ and $K-j$ fall in $(r_0,1]$. It is a standard fact that the probability of exactly $j$ points fall in $[0,r_0]$ (and the remaining $K-j$ in $[r_0,1]$) is the binomial probability\vspace{-2pt}
\begin{align*}
    \mathbb{P}\bigl[c_0^{(j)} < r_0 \leq c_0^{(j+1)}\bigr] = \binom{K}{j}r_0^j(1-r_0)^{K-j}.\\[-19pt]
\end{align*}
Now, we seek to determine\vspace{-2pt}
\begin{equation*}
    \mathbb{E}\bigl[c_0^{(j+1)}-c_0^{(j)} \,|\, c_0^{(j)} < r_0 \leq c_0^{(j+1)}\bigr].
\end{equation*}
Since we assume that $c_0^{(j)} < r_0 \leq c_0^{(j+1)}$, we know that $j$ of the random variables fell in $[0,r_0]$, and the remaining $K-j$ in $[r_0,1]$. Hence, if we denote by $X$ the maximum of the $j$ points in $[0,r_0]$ and $Y$ the minimum of the $K-j$ points in $[r_0,1]$, we find that
\[
\mathbb{E}\bigl[c_0^{(j+1)}-c_0^{(j)} \,|\, c_0^{(j)} < r_0 \leq c_0^{(j+1)}\bigr] = \mathbb{E}[Y-X].
\]
Moreover, we may write
\[
\mathbb{E}[Y-X] = \mathbb{E}\bigl[(Y-r_0)-X\bigr]+r_0 =: \mathbb{E}[Z-X]+r_0,
\]
where $X=\max\{c_0^{(i)}:i=0,1,\dots,j\}$, $Z=Y-r_0 = \min\{c_0^{(i)}-r_0:i=j+1,\dots,K+1\}$ and the $c_0^{(i)}$ and $c_0^{(i)}-r_0$ are uniformly distributed on $[0,r_0]$ and $[0,1-r_0]$, respectively. Hence, $X$ is the maximum of $j$ uniformly random variables on $[0,r_0]$ and $Z$ is the minimum of $K-j$ uniformly random variables on $[0,1-r_0]$.  
Therefore, these quantities are precisely the first order statistic of a sample of $n$ uniform variables on $(0,1)$. It is known (see \cite[Example 3.1.1]{MR1994955}) that if $X_j\sim \mathcal{U}(0,s)$, then they follow a $\text{Beta}(1,n)$ (resp.~$\text{Beta}(n,1)$) random variable and their means are
\[
\mathbb{E}\bigl[\max\{X_1,X_2,\dots,X_n\}\bigr] = \frac{ns}{n+1} \qquad\text{and}\qquad \mathbb{E}\bigl[\min\{X_1,X_2,\dots,X_n\}\bigr] = \frac{s}{n+1}.
\]
Hence, it follows that
\[
\mathbb{E}\bigl[c_0^{(j+1)}-c_0^{(j)} \,|\, c_0^{(j)}< r_0 \leq c_0^{(j+1)}\bigr] = \mathbb{E}[Z-X]+r_0 = \frac{1-r_0}{K-j+1} - \frac{jr_0}{j+1} + r_0 = \frac{1-r_0}{K-j+1} + \frac{r_0}{j+1}.
\]
Consequently, we finally find
\begin{align*}
    \mathbb{E}[\ell_1\,|\, r_0] &= \sum_{j=0}^K \mathbb{E}\bigl[c_0^{(j+1)}-c_0^{(j)} \,|\, c_0^{(j)}< r_0 \leq c_0^{(j+1)}\bigr] \cdot \mathbb{P}\bigl[c_0^{(j)} < r_0 \leq c_0^{(j+1)} \bigr] \\ 
    &= \sum_{j=0}^K \left( \frac{1-r_0}{K-j+1} + \frac{r_0}{j+1} \right) \cdot\binom{K}{j}r_0^j(1-r_0)^{K-j}.
\end{align*}
For the first sum, use the identity $\frac{1}{j+1}\binom{K}{j} = \frac{1}{K+1}\binom{K+1}{j+1}$ and set $i=j+1$ to obtain
\[
\sum_{j=0}^K \frac{r_0}{j+1} \binom{K}{j} r_0^j (1-r_0)^{K-j} 
= \frac{1}{K+1} \sum_{i=1}^{K+1} \binom{K+1}{i} r_0^i (1-r_0)^{K+1-i} 
= \frac{1-(1-r_0)^{K+1}}{K+1}.
\]
For the second sum, set $i=K-j$ and apply the same argument:
\[
\sum_{j=0}^K \frac{1-r_0}{K-j+1} \binom{K}{j} r_0^j (1-r_0)^{K-j} = \frac{1-r_0^{K+1}}{K+1}.
\]
Combining both yields
\[
\mathbb{E}[\ell_1\,|\, r_0]  = \sum_{j=0}^K \left( \frac{r_0}{j+1} + \frac{1-r_0}{K-j+1} \right)\! \binom{K}{j} r_0^j (1-r_0)^{K-j} 
= \frac{2 - r_0^{K+1} - (1-r_0)^{K+1}}{K+1}.
\]
As a final step, integrate over $r_0$ to get the expectation of $\ell_1$, since $r_0\sim \mathcal{U}(0,1)$. We then obtain
\begin{align*}
    \mathbb{E}[\ell_1] &= \mathbb{E}\bigl[\mathbb{E}[\ell_1\,|\, r_0]\bigr] = \int_0^1 \frac{2 - r_0^{K+1} - (1-r_0)^{K+1}}{K+1}\,\mathrm{d} r_0 \\
    &=  \frac{2}{K+1} - \int_0^1 \frac{r_0^{K+1} + (1-r_0)^{K+1}}{K+1}\,\mathrm{d} r_0 \\
    %
    %
    &= \frac{2}{K+1} - \frac{2}{(K+1)(K+2)} = \frac{2}{K+2}. \qedhere
\end{align*}
\end{proof}

As in the single-cut case, the following can be deduced inductively from the above.
\begin{cor}
    Assume $r\sim\mathcal{U}(0,1)$. The $(K+1)$-section algorithm obtained by taking a vector of $K$ independent and uniformly distributed cuts in $[0,1]$ at each iteration satisfies
    \begin{equation*}
        \mathbb{E}[\ell_n] = \frac{2}{K + 2}\quad\text{for all $n>0$.}
    \end{equation*}
\end{cor}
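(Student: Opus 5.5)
The plan is to reduce the statement to the single-step computation of \Cref{cor - K-mean} by means of the stationarity result \Cref{thm:K-stationarity}, following the pattern of \Cref{cor - constant} in the single-cut case.

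\textbf{Step 1: the pair $(\mathbf{c}_n,r_n)$ has the same law as $(\mathbf{c}_0,r_0)$.} By \Cref{thm:K-stationarity}, if $r_0\sim\mathcal{U}(0,1)$ then $r_n\sim\mathcal{U}(0,1)$ for every $n\ge 1$. Next, $r_n$ is a measurable function of $r_0,\mathbf{c}_0,\dots,\mathbf{c}_{n-1}$, since it is obtained by iterating $T_{K+1}$. The cut vector $\mathbf{c}_n$ is drawn freshly at iteration $n$, independently of all earlier randomness. Hence $\mathbf{c}_n$ is independent of $r_n$, and its coordinates are i.i.d.\ $\mathcal{U}(0,1)$. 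Combining these facts, the joint law of $(\mathbf{c}_n,r_n)$ coincides with that of $(\mathbf{c}_0,r_0)$.

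\textbf{Step 2: transfer the expectation.} The length $\ell_{n+1}$ is defined by the same deterministic piecewise rule as $\ell_1$:
\[
\ell_{n+1}=c_n^{(j+1)}-c_n^{(j)} \quad\text{whenever } c_n^{(j)}\le r_n<c_n^{(j+1)} .
\]
So $\ell_{n+1}=\Lambda(\mathbf{c}_n,r_n)$ for the same measurable map $\Lambda$ that gives $\ell_1=\Lambda(\mathbf{c}_0,r_0)$. By Step 1, $\ell_{n+1}$ therefore has the same distribution as $\ell_1$. \Cref{cor - K-mean} then gives $\mathbb{E}[\ell_{n+1}]=\mathbb{E}[\ell_1]=\frac{2}{K+2}$. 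Induction on $n$, or simply applying the argument to every $n\ge 0$, yields the claim for all $n>0$.

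\textbf{Main point to check.} The only step needing care is the independence of $\mathbf{c}_n$ from $r_n$ in Step 1. Stationarity of the marginal of $r_n$ alone would not suffice, since $\ell_{n+1}$ depends on the pair $(\mathbf{c}_n,r_n)$ jointly. This independence follows directly from the algorithm's construction, because the cuts at step $n$ are sampled independently of the past. I would state it explicitly, just as the proof of \Cref{thm - ind} lists ``$c_n$ is independent from $r_n$'' as an ingredient. The remainder of the argument is immediate.
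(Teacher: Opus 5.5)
Your proposal is correct and follows essentially the paper's argument. The paper simply says the result follows inductively, as in the single-cut case, from the stationarity of $\mathcal{U}(0,1)$ (\Cref{thm:K-stationarity}) and the one-step computation $\mathbb{E}[\ell_1]=\frac{2}{K+2}$ (\Cref{cor - K-mean}); your explicit check that $\mathbf{c}_n$ is independent of $r_n$, so that $(\mathbf{c}_n,r_n)$ has the same joint law as $(\mathbf{c}_0,r_0)$ and hence $\ell_{n+1}$ has the same law as $\ell_1$, is exactly the detail the paper leaves implicit.
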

Hence, assuming that the root $r_0$ is distributed uniformly on $[0,1]$, the stochastic bisection method with $K$ cuts uniformly chosen has a reduction factor which is approximately twice as large as the deterministic $(K+1)$-section method. Interestingly enough, a random trisection method ($K=2$) with uniform cuts has, on average, the same scaling factor at each step as the deterministic bisection method, assuming the root is uniformly distributed.

\section{Numerical Experiments}\label{sec:5}

We perform numerical experiments to validate the previous theoretical results. The values below are obtained from the original bisection method with a probabilistic cut (\Cref{alg:bisection}), rather than from the scale-invariant version, to confirm that our results remain valid in this setting.

\subsection{Expected Convergence Rate}

To empirically estimate the expected scaling factor, consider the following experiment:
\begin{enumerate}
    \item Generate $r \sim \mathcal{U}(0,1)$;
    \item Run \Cref{alg:bisection} with cuts $c_n\sim \mathcal{D}(0,1)$, for $f(x) = x - r$, with a very low tolerance (e.g., $10^{-15}$) and a large number of iterations (e.g., $N=30$);
    \item Compute the scaling factors for all iterations, 
    $ \frac{b_{i+1}-a_{i+1}}{b_i - a_i}$, and store them; 
    \item Repeat steps 1 to 3 a large number of times (e.g., 500 runs);
    \item Compute a 95\% confidence interval (CI) for the mean using bootstrap.
\end{enumerate}

\begin{table}[ht]
    \centering
    \begin{tabular}{ccccc}
        \toprule
        $\mathcal{D}$ & $\mathcal{U}(0,1)$ & $\mathrm{Beta}(2,2)$ & $\mathrm{Beta}(0.5,2)$ & $\mathrm{Bates}(20)$ \\
        \midrule
        $\mathbb{E}[\ell_i]$ & $[0.654, 0.677]$ & $[0.595, 0.602]$ & $[0.767, 0.775]$ & $[0.507, 0.509]$\\
        \midrule
        $(\mathbb{E}[L_N])^{\frac{1}{N}}$ & $[0.655, 0.676]$ & $[0.599, 0.612]$ & $[0.764, 0.781]$ & $[0.508, 0.510]$
        \\
        \midrule
        Theoretical & $0.\overline{6}$ & 0.6 & $0.7714\dots$ &$0.508\overline{3}$\\
        \bottomrule
    \end{tabular}
    \caption{Expected scaling factor for multiple distributions.}
    \label{tab:scalingfactors}
\end{table}

The main advantage of using bootstrap confidence intervals is that they are non-parametric, meaning that they are expected to work without any knowledge (or mild hypotheses) about the underlying distribution \cite{Efron_Tibshirani_1994}. The results are shown in \Cref{tab:scalingfactors}. For all of the tested distributions, the theoretical value lies within the confidence intervals and the confidence intervals are very small, providing statistical evidence that the theoretical scaling factors are indeed correct. Moreover, a distribution with a lot of mass at $1/2$ and a small variance like $\mathrm{Bates}(20)$ has a convergence rate very close to the classical bisection algorithm. The length of the last interval for each run is also measured to verify that it also satisfies the conclusion of \Cref{cor - final}.

We repeat the same experiment for $K$-cuts variants of the algorithm (\Cref{tab:scalingfactorsK}), where the distribution of the root and cuts are i.i.d. $\mathcal{U}(0,1)$. The expected scaling factors are also matching the theoretical expression as a function of the number of cuts.

\begin{table}[ht]
    \centering
    \begin{tabular}{ccccc}
        \toprule
        $K$ & $2$ & $3$ & $4$\\
        \midrule
        $\mathbb{E}[\ell_i]$ & $[0.495, 0.504]$ & $[0.394, 0.402]$ & $[0.331, 0.338]$\\
        \midrule
        $(\mathbb{E}[L_N])^{\frac{1}{N}}$ & $[0.489, 0.509]$ & $[0.385, 0.401]$ & $[0.331, 0.360]$
        \\
        \midrule
        Theoretical & $0.5$ & 0.4 & $0.\overline{3}$\\
        \bottomrule
    \end{tabular}
    \caption{Expected scaling factor for random $(K+1)$-section.}
    \label{tab:scalingfactorsK}
\end{table}

The last experiment of this section considers fixed root values: one near the boundary ($r=0.1$), one near the center ($r=0.45$), and one in between ($r=0.3$). To reach a tolerance of $10^{-8}$, the deterministic bisection method requires 27 iterations, independently of the root's position. Over 1000 runs for each initial root, we compute a 95\% bootstrap CI for the average number of iterations needed to reach this tolerance (\Cref{tab:nbriter}). We also compute the range, i.e., the minimal and maximal number of iterations needed, and the number of runs performing at least as well as deterministic bisection, which we call \textit{lucky runs}. We estimate $\mathbb{P}[N\leq 27]$ by a proportion with Wilson CIs \cite{Franco_Little_Louis_Slud_2019}. Unsurprisingly, $\mathrm{Beta}(2,0.5)$ performs worst, since it is biased away from the subinterval initially containing the root. $\mathrm{Beta}(0.5,2)$ has the opposite problem: if the root eventually becomes centered in the working interval, the cuts remain biased to the left and become suboptimal. Still, lucky runs occur with very small probability, below $1\%$. The uniform distribution also performs poorly, requiring on average 10 more iterations and yielding only a small probability of lucky runs in all cases. $\mathrm{Beta}(2,2)$ requires about 5 more iterations on average, but has a significantly higher probability of lucky runs, which can save up to 10 iterations. Finally, $\mathrm{Bates}(20)$ has a $50\%$ to $60\%$ chance of producing a lucky run; however, since its variance is small, lucky runs improve performance only modestly, saving at most 2 or 3 iterations. Moreover, on average, it requires at most one more iteration than deterministic bisection to converge. The effect of the root position is consistent with \Cref{fig:explength} and \Cref{fig:explength_beta22} for the $\mathcal{U}(0,1)$ and $\mathrm{Beta}(2,2)$ cases: convergence seems slightly faster near the boundary and slowest near the center.

\begin{table}[ht]
    \centering
    \begin{tabular}{ccccccc}
        \toprule
        $r$ & &$\mathcal{U}(0,1)$&$\mathrm{Beta}(2,2)$ & $\mathrm{Beta}(0.5,2)$ & $\mathrm{Beta}(2,0.5)$ & $\mathrm{Bates}(20)$ \\
        \midrule
        \multirow{3}*{0.1}\!\!  & Mean& $[37.0,37.8]$ & $[31.8,32.3]$ & $[49.4,50.6]$ & $[54.0,55.4]$ & $[27.4,27.5]$\\
        & Range&$[19,56]$ & $[17,47]$ & $[17,89]$ & $[19,88]$ & $[25,31]$
        \\
        & $\mathbb{P}[N\leq 27]$\!\!& $[0.03,0.06]$ & $[0.12,0.16]$ & $[0.00, 0.01]$  & $[0.00,0.01]$ & $[0.53,0.59]$
        \\
        \midrule
        \multirow{3}*{0.3}\!\! & Mean& $[38.0,38.7]$ & $[32.4,32.9]$ & $[51.7,53.0]$ & $[53.7,54.4]$ & $[27.4,27.5]$\\
        & Range &$[22,56]$& $[21,46]$& $[17,95]$& $[23,93]$& $[24,31]$\\
        & $\mathbb{P}[N\leq 27]$\!\! & $[0.02,0.04]$ & $[0.08,0.12]$ & $[0.00,0.02]$ & $[0.00,0.01]$ & $[0.52,0.58]$\\
        \midrule
        \multirow{3}*{0.45}\!\! & Mean& $[37.9,38.7]$ & $[32.2,32.7]$ & $[52.6,53.8]$ & $[53.9,54.0]$ & $[27.4,27.5]$
        \\
        & Range &$[20,62]$& $[20,47]$& $[21,95]$& $[24,89]$& $[25,31]$\\
        & $\mathbb{P}[N\leq 27]$\!\! &$[0.03,0.05]$& $[0.08,0.11]$& $[0.00,0.01]$ & $[0.00,0.01]$ & $[0.49,0.55]$\\
        \bottomrule
    \end{tabular}
    \caption{Statistics on the number of iterations for convergence with a fixed initial root.}
    \label{tab:nbriter}
\end{table}
\vspace{-6pt}

\subsection{Convergence to the Uniform Distribution}\label{sec - conv_unif}

To verify convergence toward the stationary uniform distribution for different initial root distributions $\mathcal{R}$, we consider the following experiment:
\begin{enumerate}
    \item Generate $r \sim \mathcal{R}(0,1)$;
    \item Run \Cref{alg:bisection} with uniformly distributed cuts, for $f(x) = x - r$, with a very low tolerance (e.g., $10^{-15}$) and a large number of iterations (e.g., $N=40$);
    \item Normalize the root to its relative position in $[0,1]$ with $r_N \gets \frac{r - a_N}{b_N-a_N}$ and store it;
    \item Repeat steps 1 to 3 a large number of times (e.g., 1000 runs).
\end{enumerate}

Using the stored normalized roots, we can then validate that their distribution is nearly uniform with a Q-Q plot. We use three initial distributions $\mathcal{R}$:
\begin{itemize}
    \item The uniform distribution;
    \item A symmetric distribution, $\mathrm{Beta}(2,2)$;
    \item An asymmetric distribution, $\mathrm{Beta}(0.5, 2)$.
\end{itemize}
\Cref{fig:qq_unif}, \Cref{fig:qq_beta22} and \Cref{fig:qq_beta052} show that for the three distributions considered, the quantiles of the generated data follow closely the quantiles of the desired uniform distribution. This confirms that after a large number of iterations, even for a strongly asymmetric distribution, the distribution of the rescaled root approaches the uniform distribution.

\begin{figure}[ht]
\begin{minipage}{.49\linewidth}
    \begin{figure}[H]

    \vspace{-2pt}
    \caption{Q-Q plot for $r\sim \mathcal{U}(0,1)$.}
    \label{fig:qq_unif}
\end{figure}
\end{minipage}%
\begin{minipage}{.02\linewidth}
\hfill
\end{minipage}%
\begin{minipage}{.49\linewidth}
    \begin{figure}[H]
    %
    \vspace{-2pt}
    \caption{Q-Q plot for $r\sim \mathrm{Beta}(2,2)$.}
    \label{fig:qq_beta22}
\end{figure}
\end{minipage}
\end{figure}

Furthermore, we can approximate numerically the convergence rate of some initial distributions to the uniform distribution in the $L^1$ norm. The $L^1$ norm is estimated numerically using the area of an histogram with 100 bins. The following simulation is used to estimate the convergence rate:
\begin{enumerate}
    \item Draw $M=10,\!000,\!000$ initial roots from the distribution $\mathcal{R}$;
    \item Do a single step of \Cref{alg:bisection} with $c_n\sim \mathcal{U}(0,1)$ for each of the $M$ roots. Rescale each root with respect to $[a_n,b_n]$ and store its value;
    \item Compute the $L^1$ norm with the previously computed rescaled roots and store its value;
    \item Overwrite the roots with their rescaled counterparts;
    \item Repeat steps 2 to 4 until reaching a large enough number of iterations (e.g., $N=9$);
    \item Using least squares regression, fit a linear model to the log of the stored values of the $L^1$ distance.
\end{enumerate}
As it can be seen on \Cref{fig:l1_beta35}, because of the random nature of the experiment, the convergence cannot be fully observed beyond the noise level. Therefore, the algorithm are only done up to $N=9$ iterations to avoid this regime where the convergence stagnates.

The different initial distributions which will be tested are characterized by their properties in (\Cref{tab:distprop}). We define the PDF $p(x) = \frac{1-\alpha}{2^\alpha}|x-1/2|^{-\alpha}$ with $\alpha = 0.9$.
\begin{table}[]
    \centering
    \begin{tabular}{ccccc}
        \toprule
        Distribution & Polynomial & Bounded & Symmetric & Highly-concentrated\\
        \midrule
        Bates(20) &  &\checkmark & \checkmark & \checkmark \\
        \midrule
         Beta(2,2) & \checkmark & \checkmark & \checkmark & \\
        \midrule
         $p$ &   && \checkmark & \checkmark\\
         \midrule
         Beta(3,5) & \checkmark & \checkmark & & \\ 
         \midrule
         Beta(1,20) & \checkmark & \checkmark & & \checkmark\\
         \midrule
         Beta(0.1,2) && & &\checkmark \\
        \bottomrule
    \end{tabular}
    \caption{Properties of the different distributions studied.}
    \label{tab:distprop}
\end{table}

The results for the convergence rates of the different initial densities are presented in \Cref{tab:convrate}. We observe that the symmetric distributions ($\mathrm{Bates}(20)$, $\mathrm{Beta}(2,2)$ and $p$) converges to the uniform distribution at a rate close to $1/2$, which is expected following \Cref{rem - upperbound} ($\lambda_2=1/2$ for the eigen-polynomials in the uniform measure), even though $p$ and $\mathrm{Bates}(20)$ have small dispersion and a large concentration of mass at $1/2$, with $p$ even being unbounded. We note that the singularity of $p$ appears to not be a problem, since although its PDF is not well approximated by polynomials, the theoretical PDF of $r_1$ is bounded and the one of $r_2$ is bounded and $C^2$. Hence, we may apply the convergence analysis to $r_2$, which is well approximated by polynomials, and thus get back the predicted convergence rate.

By contrast, the asymmetric distribution Beta(1,20) converge more slowly, at a rate closer to the bound given by the theorem. For the distribution Beta(0.1,2), the convergence rates is much larger than the rate given by the theorem. However, as mentioned in \Cref{rem - upperbound}, the convergence also has a strong dependence on the approximability of the initial density by polynomial eigenfunctions of the operator $\mathcal{L}$. Since Beta(0.1,2) is poorly approximated by polynomials, this heuristically explain the observed convergence rate.

However, we can be much more precise than this (although we stay voluntarily vague for this explanation). Indeed, by \eqref{eq - PDF}, the PDF of the root $r_{1}$ is given by 
\begin{align*}
    g_{1}(u) &= \int_0^1
        \bigl(
            c g_0(cu)
            +(1-c)g_0(c+(1-c)u)
        \bigr)
        \,\mathrm{d}c \\
        &= \frac{1}{u^2}\int_0^u c g_0(c)\,\mathrm{d}c
            + \frac{1}{(1-u)^2}\int_u^1 (1-c) g_0(c)\,\mathrm{d}c
\end{align*}
If $g_0$ can be well approximated by a polynomial, then the previous approach holds with good precision. However, near 0, the distribution Beta(0.1,2) is better approximated by the function $A/x^{0.9}$, for some constant $A>0$. In that case, we have
\begin{align*}
    g_{1}(u) &= \frac{1}{u^2}\int_0^u c g_0(c)\,\mathrm{d}c
            + \frac{1}{(1-u)^2}\int_u^1 (1-c) g_0(c)\,\mathrm{d}c \\
            &\approx \frac{A}{u^2}\int_0^u c^{0.1} \,\mathrm{d}c = \frac{10A}{11} u^{-0.9} = \frac{10}{11} g_0(u).
\end{align*}
Proceeding inductively, we find that the singular part of Beta(0.1,2) decays like $(10/11)^n = 0.\overline{90}^n$. Since this singular part is what influences the $L^1$ distance to the uniform distribution's PDF the most, one might expect that the convergence rate behaves like $0.\overline{90}^n$, which is precisely what is observed numerically.

These results suggest that polynomial approximability is the main bottleneck for the convergence rate. Hence, once sufficient regularity is assumed, symmetry becomes the dominant factor, often bringing the rate down to nearly $1/2$.

\begin{figure}[ht]
\begin{minipage}[t]{.49\linewidth}
    \begin{figure}[H]

    \vspace{-2pt}
    \caption{Q-Q plot for $r\sim \mathrm{Beta}(0.5,2)$.}
    \label{fig:qq_beta052}
\end{figure}
\end{minipage}%
\begin{minipage}{.02\linewidth}
\hfill
\end{minipage}%
\begin{minipage}[t]{.49\linewidth}
    \begin{figure}[H]
    %

    \vspace{2pt}
    \caption{Convergence of the $L^1$ norm for $r\sim \mathrm{Beta}(3,5)$ (red points) and fitted exponential model (blue line) on a semilogarithmic scale.}
    \label{fig:l1_beta35}
\end{figure}
\end{minipage}
\end{figure}

\begin{table}[ht]
    \centering
    {\footnotesize
    %
    }
    \caption{Approximate convergence rate for multiple initial root distributions.}
    \label{tab:convrate}
\end{table}

The distance between the means $\mu_{\ell_n}$ and $\mu_\ell$ should follow a similar decay rate. To verify this, $\ell_i$ is computed at each iteration of the algorithm and averaged over a large number of runs, with the initial $r$ drawn with respect to the distribution $\mathcal{R}$. The cut distribution is always $\mathcal{U}(0,1)$, so $\mu_{\ell_n}\to 2/3$. In \Cref{tab:convrate}, we note that the means converge at a rate similar or faster to that of the density $g_n$.

\subsection{Pathological case: correlation in the \texorpdfstring{$\ell_i$}{ℓᵢ}}

The uniformity of $r$ was instrumental in proving \Cref{thm - ind}, namely that $\ell_i$ is independent of $\ell_j$ for $i\neq j$. We now construct a counterexample when the root is not uniformly distributed. Consider a distribution with most of its mass near one boundary, for instance $\mathrm{Beta}(5,50)$, so that the root is near the left end of the interval with high probability. If the cuts follow the same distribution, they are also close to the root with high probability. The following phenomenon then occurs: if the cut falls slightly to the left of the root, $\ell_1$ is very long (\Cref{fig:c0leqr0}); if it falls slightly to the right, $\ell_1$ is shorter (\Cref{fig:c0geqr0}). On the next iteration, after rescaling, the root is either near the left boundary if $\ell_1$ was large, or near the right boundary if $\ell_1$ was small.

\begin{figure}[ht]
\begin{minipage}{.49\linewidth}
    \begin{figure}[H]
    \centering
    \begin{tikzpicture}[scale=5.3]
\draw[|-|] (0,0) -- (1.1,0);
\draw (0,0) node[below,yshift=-3pt] {$0$};
\draw (1.1,0) node[below,yshift=-3pt] {$1$};
\draw[dashed,semithick,decorate, decoration={aspect=0, segment length=2pt, amplitude=0.5pt}]
  plot[smooth] coordinates {(0,0) (0.03,0.1) (0.1,0.4) (0.2,0.08) (0.3,0.02) (0.5,0.01) (0.7,0.007) (1.1,0.005)};
\fill (0.08,0) circle (0.01) node[below] {$c_0$};
\fill (0.14,0) circle (0.01) node[below] {$r_0$};
\draw[|<->|, semithick] (0.08,-0.12) -- (1.1,-0.12) node[below,midway] {$\ell_1$};
\end{tikzpicture}
\end{figure}
\end{minipage}%
\begin{minipage}{.02\linewidth}
\hfill
\end{minipage}%
\begin{minipage}{.49\linewidth}
    \begin{figure}[H]
    \centering
    \begin{tikzpicture}[scale=5.3]
\draw[|-|] (0,0) -- (1.1,0);
\draw (0,0) node[below,yshift=-3pt] {$0$};
\draw (1.1,0) node[below,yshift=-3pt] {$1$};
\draw[dashed,semithick,decorate, decoration={aspect=0, segment length=2pt, amplitude=0.5pt}]
  plot[smooth] coordinates {(0,0) (0.03,0.1) (0.1,0.4) (0.2,0.08) (0.3,0.02) (0.5,0.01) (0.7,0.007) (1.1,0.005)};
\fill (0.12,0) circle (0.01) node[below] {$c_1$};
\fill (0.05,0) circle (0.01) node[below] {$r_1$};
\draw[|<->|, semithick] (0,-0.12) -- (0.12,-0.12) node[below,midway] {$\ell_2$};
\end{tikzpicture}
\end{figure}
\end{minipage}
\vspace{-3pt}
\caption{First and second iterations when $c_0<r_0$. In dashed, the PDF of $r_0$, $c_0$ and $c_1$.}
    \label{fig:c0leqr0}
\end{figure}
\vspace{-2pt}

In the first case, the new rescaled root will be pushed further to the left, reducing the probability of picking a cut to its left, making it more likely to pick a cut to its right, and having a small $\ell_2$. In the second case, the next cut will be near the left edge with high probability, and $\ell_2$ will likely be large. Hence, there is an inverse relationship between $\ell_1$ and $\ell_2$ (if one was large, the other should likely be small).

\begin{figure}[ht]
\begin{minipage}{.49\linewidth}
    \begin{figure}[H]
    \centering
    \begin{tikzpicture}[scale=5.3]
\draw[|-|] (0,0) -- (1.1,0);
\draw (0,0) node[below,yshift=-3pt] {$0$};
\draw (1.1,0) node[below,yshift=-3pt] {$1$};
\draw[dashed,semithick,decorate, decoration={aspect=0, segment length=2pt, amplitude=0.5pt}]
  plot[smooth] coordinates {(0,0) (0.03,0.1) (0.1,0.4) (0.2,0.08) (0.3,0.02) (0.5,0.01) (0.7,0.007) (1.1,0.005)};
\fill (0.09,0) circle (0.01) node[below] {$r_0$};
\fill (0.14,0) circle (0.01) node[below] {$c_0$};
\draw[|<->|, semithick] (0,-0.12) -- (0.14,-0.12) node[below,midway] {$\ell_1$};
\end{tikzpicture}
\end{figure}
\end{minipage}%
\begin{minipage}{.02\linewidth}
\hfill
\end{minipage}%
\begin{minipage}{.49\linewidth}
    \begin{figure}[H]
    \centering
    \begin{tikzpicture}[scale=5.3]
\draw[|-|] (0,0) -- (1.1,0);
\draw (0,0) node[below,yshift=-3pt] {$0$};
\draw (1.1,0) node[below,yshift=-3pt] {$1$};
\draw[dashed,semithick,decorate, decoration={aspect=0, segment length=2pt, amplitude=0.5pt}]
  plot[smooth] coordinates {(0,0) (0.03,0.1) (0.1,0.4) (0.2,0.08) (0.3,0.02) (0.5,0.01) (0.7,0.007) (1.1,0.005)};
\fill (0.12,0) circle (0.01) node[below] {$c_1$};
\fill (0.8,0) circle (0.01) node[below] {$r_1$};
\draw[|<->|, semithick] (0.12,-0.12) -- (1.1,-0.12) node[below,midway] {$\ell_2$};
\end{tikzpicture}
\end{figure}
\end{minipage}
\vspace{-3pt}
\caption{First and second iterations when $c_0>r_0$. In dashed, the PDF of $r_0$, $c_0$ and $c_1$.}
    \label{fig:c0geqr0}
\end{figure}
\vspace{-2pt}

To demonstrate that this phenomenon is happening in practice, we run the bisection algorithm using $\mathrm{Beta}(5,50)$ for the initial root and the cuts, for $M=10000$ different initial roots. We let the algorithm complete 14 iterations, we collect vectors $\big(\ell_1^{(m)},\dots,\ell_{14}^{(m)}\big)$ ($m=1,\dots,M$), and then assemble the correlation matrix to study the dependence structure between the $\ell_i$ (\Cref{fig:corrbeta}). We repeat the same experiment for uniform initial root distribution (\Cref{fig:corrunif}). We see that in the uniform case, all the variables seem to be uncorrelated, which is consistent with the independence hypothesis. However, when $r_0\sim\mathrm{Beta}(5,50)$, there is a strong negative correlation between $\ell_1$ and $\ell_2$, as it was predicted. Moreover, the dependence on $\ell_1$ and $\ell_2$ seems to carry over as far as $\ell_{10}$, before vanishing. Because the stationary distribution is uniform, the variables appearing later in the process have likely very weak dependence.
\vspace{-6pt}

\begin{figure}[H]
\begin{minipage}{.02\linewidth}
\hfill
\end{minipage}%
\begin{minipage}[t]{.46\linewidth}
    \begin{figure}[H]
    \centering
    \begin{tikzpicture}[scale=0.8,xscale=.85]

\definecolor{darkgray176}{RGB}{176,176,176}

\definecolor{bluefill}{RGB}{25,25,112} 
    \definecolor{redfill}{rgb}{0.8, 0.25, 0.33} 

\begin{axis}[
very thick,
colorbar,
colorbar style={ylabel={}},
colormap={mymap}{[1pt]
  rgb(0pt)=(0.01,0.01,0.44);
  rgb(1pt)=(1,1,1);
  rgb(2pt)=(0.8, 0.25, 0.33)
},
point meta max=1,
point meta min=-1,
tick align=outside,
tick pos=left,
x grid style={darkgray176},
xmin=0.5, xmax=14.5,
xtick style={color=black},
y dir=reverse,
y grid style={darkgray176},
ymin=0.5, ymax=14.5,
ytick style={color=black},
xtick={1,2,4,6,8,10,12,14},
ytick={1,2,4,6,8,10,12,14},
]
\addplot graphics [includegraphics cmd=\pgfimage,xmin=0.5, xmax=14.5, ymin=14.5, ymax=0.5] {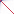};
\end{axis}

\end{tikzpicture}

    \vspace{-2pt}
    \caption{Correlation matrix for variables $\ell_1,\dots,\ell_{14}$ when $r_0\sim\mathrm{Beta}(5,50)$.}
    \label{fig:corrbeta}
\end{figure}
\end{minipage}%
\begin{minipage}{.04\linewidth}
\hfill
\end{minipage}%
\begin{minipage}[t]{.46\linewidth}
    \begin{figure}[H]
    \centering
    \begin{tikzpicture}[scale=0.8,xscale=.85]

\definecolor{darkgray176}{RGB}{176,176,176}

\begin{axis}[
very thick,
colorbar,
colorbar style={ylabel={}},
colormap={mymap}{[1pt]
  rgb(0pt)=(0.01,0.01,0.44);
  rgb(1pt)=(1,1,1);
  rgb(2pt)=(0.8, 0.25, 0.33)
},
point meta max=1,
point meta min=-1,
tick align=outside,
tick pos=left,
x grid style={darkgray176},
xmin=0.5, xmax=14.5,
xtick style={color=black},
y dir=reverse,
y grid style={darkgray176},
ymin=0.5, ymax=14.5,
ytick style={color=black},
xtick={1,2,4,6,8,10,12,14},
ytick={1,2,4,6,8,10,12,14},
]
\addplot graphics [includegraphics cmd=\pgfimage,xmin=0.5, xmax=14.5, ymin=14.5, ymax=0.5] {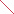};
\end{axis}

\end{tikzpicture}
    \vspace{-2pt}
    \caption{Correlation matrix for variables $\ell_1,\dots,\ell_{14}$ when $r_0\sim\mathcal{U}(0,1)$.}
    \label{fig:corrunif}
\end{figure}
\end{minipage}%
\begin{minipage}{.02\linewidth}
\hfill
\end{minipage}%
\end{figure}

\section{Conclusion}

In this paper, we studied convergence properties for a family of stochastic bisection methods in which the cut is chosen randomly according to any arbitrary distribution. When the position of the root is assumed to be uniformly distributed over the search interval, we derived explicit expected convergence rates depending only upon the mean and variance of the distribution of the cut. In particular, we showed that the relative length of the new working interval is independent from all previous information, which allowed us to compute the expected length of the interval at iteration $n$ in a manner similar to the classical deterministic bisection method. On average, the convergence is linear, and the expected interval length behaves as $C^n L_0$, where $C=1-2\,\mathbb{E}[c(1-c)]$ is the expected contraction factor.

We then extended our results to any initial absolutely continuous prior distribution of the root, provided that the cut distribution gives positive mass to the interior of the interval. Exploiting the stationarity of the uniform distribution for the normalized root process, we showed that the convergence rate approaches the one obtained in the uniform case. We also introduced and analyzed a multisection generalization in which several random cuts are chosen at each iteration, and we determined its average convergence behavior. The theoretical predictions were validated numerically through statistical simulations, which provided confidence intervals supporting the predicted convergence rates. These experiments also suggest that convergence toward the stationary regime may be slower when the prior distribution of the root is strongly asymmetric, while in symmetric situations the theoretical rates for the distribution of the root appear conservative and convergence occurs faster in practice.

Several questions remain open.

\begin{enumerate}
\item 
When the cut distribution is highly concentrated around the midpoint, numerical evidence suggests that the stochastic algorithm has a probability exceeding $50\%$ of converging at least as fast as the classical deterministic bisection method for a fixed root. This raises the problem of identifying optimal cut distributions that minimize the number of iterations required to achieve a prescribed tolerance with high probability.

\item 
In this work, the cut distribution was assumed to be fixed across iterations. A natural extension would allow the distribution to vary dynamically. While it is quite reasonable to assume that multiple results that we obtained would hold in this setting, one may wonder how the choice of the distributions influences the convergence rate of the algorithm, and if there exists an optimal sequence of distributions which maximizes the convergence rates for a given prior distribution of the root.

\item 
Our analysis was made in the special case where the function $f$ whose root we seek to estimate is unique in the interval $[a,b]$. One may wonder how allowing the possibility of $f$ having multiple roots in $[a,b]$ might affect our analysis. It is unclear to the present authors how this case would compare to the results of this paper. 

\item 
Finally, studying the algorithm within a more systematic framework of random dynamical systems and ergodic theory could provide a more modern interpretation of the results obtained here, and possibly give insight in the study of other similar dynamical systems.
\end{enumerate}

These findings illustrate how simple random perturbations of classical numerical procedures can generate rich probabilistic dynamics, potentially opening new directions for the analysis of stochastic algorithms and their convergence laws.

\bibliographystyle{plain}
\bibliography{ref}

\end{document}